\newtheorem{theorem}{Theorem}[section]
\newtheorem{lemma}[theorem]{Lemma}
\newtheorem{proposition}[theorem]{Proposition}
\newtheorem{remark}[theorem]{Remark}
\newtheorem{claim}[theorem]{Claim}
\newtheorem{conjecture}[theorem]{Conjecture}
\numberwithin{equation}{section}
\def\N{\mathbb{N}}
\def\Z{\mathbb{Z}}
\def\R{\mathbb{R}}
\def\bP{\mathbb{P}}
\def\F{\mathcal{F}}
\newcommand{\cpc}[2]{\mathrm{BCap}_{#1}(#2)}
\newcommand{\cp}[2]{\mathrm{Cap}_{#1}(#2)}
\def\cU{\mathcal{U}}
\def\cT{\mathcal{T}}
\def\cR{\mathcal{R}}
\def\cF{\mathcal{F}}
\def\cE{\mathcal{E}}
\def\cA{\mathcal{A}}
\def\reff#1{(\ref{#1})}
\renewcommand{\phi}{\varphi}
\renewcommand{\epsilon}{\varepsilon}
\newcommand{\1}{{\text{\Large $\mathfrak 1$}}}
\newcommand{\til}{\widetilde}
\newcommand{\pr}[1]{\mathbb{P}\!\left(#1\right)}
\newcommand{\E}[1]{\mathbb{E}\!\left[#1\right]}
\newcommand{\prcond}[3]{\mathbb{P}_{#3}\!\left(#1\;\middle\vert\;#2\right)}
\newcommand{\econd}[2]{\mathbb{E}\!\left[#1\;\middle\vert\;#2\right]}
\begin{document}

\title{\bf Branching random walks and Minkowski sum of random walks}

\author{Amine Asselah \thanks{
Universit\'e Paris-Est, LAMA, UMR 8050, UPEC, UPEMLV, CNRS, F-94010 Cr\'eteil; amine.asselah@u-pec.fr} \and
Izumi Okada\thanks{Department of Mathematics and Informatics, Faculty of Science, Chiba University, Chiba 263-8522, Japan; 
iokada@math.s.chiba-u.ac.jp} \and 
Bruno Schapira\thanks{Aix-Marseille Universit\'e, CNRS, I2M, UMR 7373, 13453 Marseille, France;  bruno.schapira@univ-amu.fr} \and Perla Sousi\thanks{University of Cambridge, Cambridge, UK;   p.sousi@statslab.cam.ac.uk} 
}

\date{}
\maketitle

\begin{abstract} We show that the range of a critical branching random walk conditioned to survive forever and the Minkowski sum of two independent simple random walk ranges are {\it intersection-equivalent} in any dimension $d\ge 5$, in the sense that they hit any finite set with comparable probability, as their common starting point is sufficiently far away from the set to be hit.
Furthermore, we extend a discrete version of Kesten, Spitzer and Whitman's result on the law of large numbers for the volume of a {\it Wiener sausage}. Here, the sausage is made of the Minkowski sum of $N$  independent simple
random walk ranges in $\mathbb Z^d$, with $d>2N$, and of a finite set $A\subset \Z^d$. When properly normalised 
the volume of the sausage converges to a quantity equivalent to the capacity of $A$ with respect to the kernel $K(x,y)=(1+\|x-y\|)^{2N-d}$. As a consequence, we establish a new relation between capacity and {\it branching capacity}. 
\newline
\newline
\emph{Keywords and phrases.} Capacity, range of random walk, branching random walk, branching capacity, intersection probability.
\newline
MSC 2020 \emph{subject classifications.} 60F15; 60G50; 60J45.
\end{abstract}

\section{Introduction}
In this paper we establish similarities between the typical behaviour of two multi-parameter
processes whose Green's functions are comparable: the
Minkowski sum of two independent random walks and
the infinite invariant critical branching random walk. Both processes
are considered in the transient regime on $\mathbb Z^d$, that is when $d>4$.
The analogy holds first at the level of the volume of their {\it Wiener sausages} associated with
any set $A\subset \Z^d$. More precisely, the
Wiener sausage (of a trajectory of the process) is obtained as we {\it roll a finite set} $A\subset \mathbb Z^d$
over the trajectory. 
Secondly, the analogy holds for hitting times {\it from infinity}, showing some form of intersection-equivalence,
a notion first discussed by notion first discussed by Benjamini, Pemantle and Peres~\cite{BPP95}.
We then consider the Minkowski sum of $N$ independent random walks,
both in terms of their Wiener sausages, and then in terms of their hitting times.
Finally, in the critical dimension $d=4$, we provide a law of large
numbers result for the capacity of a discrete Wiener sausage.

\paragraph{The models.}
We first consider a critical branching random walk, and its 
{\it infinite} version. In this process, time is indexed
by a random tree obtained as follows:
let $\mu$ be an offspring distribution with mean one and positive finite 
variance $\sigma^2$, and denote by $\cT_c$ the corresponding critical Bienaym\'e-Galton-Watson (BGW) tree with root $\emptyset$. Next, attach independent increments to the edges of the tree, 
whose common law is taken for simplicity to be the uniform distribution on the neighbours of the origin. 
For $x\in \Z^d$, a branching random walk starting from $x$ 
is obtained by assigning to each vertex $u\in \cT_c$ the sum of the 
increments along the edges on the shortest path joining $u$ to~$\emptyset$, and translating the
random cloud by $x$. We denote by $\cT_c^x$ the set of vertices of $\Z^d$ visited by the nodes of $\cT_c$ when the root starts from $x$. Since the tree is critical, its Green's function, say $g$, is equal to the 
Green's function of a simple random walk on $\Z^d$. 
However, as we condition on the genealogy to be infinite and reroot appropriately, we obtain an infinite tree $\cT$ which is an example of 
Aldous' invariant sin-trees~\cite{Ald91}:
it is made of a {\it spine}, that is a semi-infinite line of nodes $(\emptyset,u_1,\dots)$,
and to each of its nodes we attach a critical tree built as follows. Independently, each node $u_i$ for $i>0$
draws a random number of children $Z_i$ with size-biased 
distribution $\mu_{\textrm{sb}}$ (defined by $\mu_{\textrm{sb}}(i ) =i\mu (i)$), identifying one uniformly at
random with $u_{i+1}$, and thus partitioning 
the $Z_i-1$ other children as left  and right on each side of the spine,
and letting each of them in turn produce a critical BGW tree with reproduction law $\mu$. We attach $Z_0$ children to the root $\emptyset$ distributed according to 
$\widetilde \mu(i)=\mu(i-1)$ (for $i\ge 1$) instead of $\mu_{\textrm{sb}}$. Its first child is identified with $u_1$, and the remaining 
$Z_0-1$ produce in turn and independently $\mu$-critical trees.
The root is assigned label $0$ and, using a clockwise depth-first search algorithm from the root, 
we label vertices on the right of the spine with positive labels.  
Using a counter-clockwise depth-first search, we label vertices on the left of the spine with negative labels. 
The set of vertices with positive labels is called the future of $\mathcal T$, and is denoted by $\mathcal T_+$, 
while the set of vertices with negative labels is called the past of $\mathcal T$, and is denoted by $\mathcal T_-$. Again we assign independent simple random walk increments to the edges of the tree. When the root starts from $x$, we write $\cT^x$ for the range of the tree $\cT$ and $\cT_-^x$ (resp.\ $\cT_+^x$) for the range of the past (resp.\ future).
The Green's function of the corresponding infinite tree-indexed walk is of the order of $g*g$, i.e.\ 
the convolution of $g$ with itself, which is finite exactly when $d\ge 5$. 

The second model is the {\it Minkowski sum} of two random walk ranges.
We recall that the Minkowski sum of two subsets 
$A,B\subset \mathbb Z^d$, is defined to be $A+B = \{a+b: a\in A, b\in B\}$.
Let $(X_n)_{n\ge 0}$ and $(\til{X}_n)_{n\geq 0}$ be two independent simple random walks on $\Z^d$. We denote by $\cR_\infty = \{X_n : n\ge 0\}$ and $\til{\cR}_\infty= \{\til{X}_n : n\ge 0\}$ their respective ranges. In this paper we study their Minkowski sum which is simply $\cR_\infty+\til{\cR}_\infty$. Thus, intuitively speaking one rolls on the support of one walk the 
support of another independent walk, obtaining a {\it sausage}. 

\paragraph{Wiener sausage.}
A celebrated result of Kesten, Spitzer and Whitman~\cite[p.\ 252]{IMcK74} (see also \cite{Sp64}) concerns
the volume of a Wiener sausage obtained as we roll a compact set, say $A\in \R^d$ with $d>2$, over
the trajectory of a transient Brownian trajectory. As we run the sausage over a time period of length~$t$, and
divide the volume of the sausage by $t$, the ratio converges to the electrostatic capacity of the set $A$.
 In our discrete setting their result reads as follows. 
Given a simple random walk $(X_n)_{n\ge 0}$, 
define its range in the time window $[a,b]$, with $0\le a\le b\le \infty$, 
by $\mathcal R[a,b]= \{X_a,\dots,X_b\}$, with the short-hand notation $\mathcal{R}_n = \mathcal R[0,n]$. 
Then almost surely, for any finite set $A\subset \mathbb Z^d$, with $d\ge 3$, 
\begin{equation}\label{Whit.Sp}
\lim_{n\to \infty} \frac{|\mathcal R_n+A|}{n} =\cp{}{A}.
\end{equation}
The limiting functional $\cp{}{A}$ turns out to be the discrete capacity of $A$. It is linked with
the Green's function through an {\it energy}.  
Indeed, for a kernel $K:\mathbb Z^d\times \mathbb Z^d\to\mathbb R^+$, 
and a probability measure $\nu$ on $A$, the $K$-energy is defined to be 
\[
\cE_{K}(\nu):=\sum_{x\in A}\sum_{y\in A}  K(y-x) \nu(x) \nu(y).
\] 
The capacity of the set $A$ is defined as
\begin{equation}\label{def-capacity}
\cp{}{A} :=\Big(\inf \Big\{\cE_{g}(\nu) :\ \nu \textrm{ probability measure on }A\Big\}\Big)^{-1}.
\end{equation}
As Spitzer observed later~\cite{Sp73},~\eqref{Whit.Sp} follows directly from Kingman's subadditive ergodic theorem. Now, consider the range $\cT^0_n$ of the first $n$ sites of
the walk indexed by $\mathcal T$ starting from the origin. 
Zhu showed in~\cite{Zhu16} that, when $d>4$, the hitting probability of a finite set, say
$A\subset \mathbb Z^d$, by the past
tree $\cT_-$ appropriately normalised has a limit that he called {\it the branching capacity} of $A$. More precisely,
\begin{equation}\label{hitting-zhu}
\textrm{BCap}(A):=\lim_{\|x\|\to\infty} \frac{2/\sigma^2}{ g*g(x)} \mathbb P\big(\mathcal T_-^x \cap A\neq \emptyset \big). 
\end{equation}
Furthermore, it was shown in \cite{ASS23} that
the branching capacity is comparable to a capacity corresponding to the kernel $g*g$ in the following sense: there exists a positive constant $C$ depending on the variance of the offspring distribution $\mu$, so that for all finite sets $A\subset \Z^d$, we have 
\begin{equation}\label{abp-var}
\frac{1}{C} \cdot \textrm{BCap}(A)^{-1}\leq \inf\Big\{ \cE_{g*g}(\nu):\ \nu \textrm{ probability measure on }A\Big\} \leq C\cdot 
\textrm{BCap}(A)^{-1}.
\end{equation}

Let $\mathcal{A}$ be an arbitrary set. For any two functions $f,h: \mathcal{A}\to \R_+$ we write $f\asymp h$ if the ratio $f(a)/h(a)$ is bounded both from above and below by positive constants uniformly over all $a\in \mathcal{A}$.

Our first result extends the result by Kesten, Spitzer and Whitman to tree-indexed random walks and additive random walks, thus revealing similarities between these two processes. Fix an offspring distribution $\mu$ with mean one and finite variance and consider the associated infinite tree $\mathcal T$. 
%
%
%

\begin{theorem}\label{theo-2saus}
Fix $d\geq 5$. Let $X$ and $\til{X}$ be two independent simple random walks on $\Z^d$ and let~$\cT_n^0$ be the first $n$ sites of a walk in $\Z^d$ indexed by the tree $\cT$ with the root starting from $0$. Let $A$ be a finite subset of $\Z^d$. Then
the following limits hold almost surely,
\begin{equation}\label{2saus-3}
 \lim_{n\to \infty} \frac{|\mathcal T^0_n+A|}{n}
\asymp 
\lim_{n\to \infty} \frac{|\cR_n+\widetilde \cR_n+A|}{n^2},
\end{equation}
with the implied constants only depending on the variance of $\mu$. 
Moreover, 
\begin{equation}\label{2saus-1}
 \lim_{n\to \infty} \frac{|\mathcal T^0_n+A|}{n} =
{\rm{BCap}}(A),
\end{equation}
and 
\begin{equation}\label{2saus-2}
\lim_{n\to \infty} \frac{|\cR_n+\widetilde \cR_n+A|}{n^2}=
\lim_{n\to \infty} \frac{\cp{}{\cR_n+A}}{n}.
\end{equation}  
\end{theorem}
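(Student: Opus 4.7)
Proof proposal.

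The plan is to prove the identifications (2saus-1) and (2saus-2) independently, and then to deduce the comparison (2saus-3) by showing both sides are comparable to the inverse $g\ast g$-energy of $A$, via (1.4).

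For (2saus-1), I follow a tree-indexed version of the Kesten-Spitzer-Whitman argument. Label the vertices of $\cT^0$ in depth-first order $v_0=\emptyset, v_1, \ldots$ and use the last-visit decomposition
$$|\cT^0_n + A| \;=\; \sum_{k=0}^{n-1} \Bigl|\bigl\{a \in A : X_{v_k}+a \notin X_{v_j}+A \text{ for all } k < j < n \bigr\}\Bigr| + O(|A|),$$
where the error term comes from breaking ties when $X_{v_k}+a = X_{v_k}+a'$. Invariance of the Aldous sin-tree $\cT$ under the shift rerooting at the spine vertex $u_1$ converts the right-hand side into a Birkhoff-type ergodic average, whose limit is the expected number of $a\in A$ such that $X_\emptyset+a$ is not covered by $X_v+A$ for any other $v \in \cT$. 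Decomposing $\cT\setminus\{\emptyset\}$ into contributions from the past tree $\cT_-$ and the future tree $\cT_+$, one identifies this expectation with $\mathrm{BCap}(A)$ via Zhu's characterisation (1.3). The main obstacle is making the ergodic reduction rigorous: the DFS order along vertices is not itself a measure-preserving transformation, so the argument has to pass through an ergodic theorem along the spine combined with bookkeeping for the random size-biased BGW subtrees attached at each spine vertex.

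For (2saus-2), existence of $\lim_n \mathrm{Cap}(\cR_n+A)/n$ follows from a subadditive-ergodic argument paralleling the one for $\mathrm{Cap}(\cR_n)/n$ (valid in $d\ge 5$), which extends to $\cR_n+A$ for any fixed finite $A$. For the left-hand side, I condition on $\cR_n$ and apply Kesten-Spitzer-Whitman to $\til{X}$ rolled against the random set $B_n := \cR_n+A$, obtaining
$$\econd{|\til{\cR}_n + B_n|}{\cR_n} = n\cdot \mathrm{Cap}(B_n)\cdot (1+o(1)),$$
and then upgrade to almost-sure convergence through a second-moment bound on $|\til{\cR}_n+B_n|$ conditional on $\cR_n$, combined with standard concentration along a geometric subsequence. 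The main technical challenge is the quantitative KSW: since $|B_n|$ grows linearly in $n$, one must show the discrepancy $\bigl|\,|\til{\cR}_n+B_n|-n\,\mathrm{Cap}(B_n)\,\bigr|$ is of order $o(\mathrm{Cap}(B_n))$ uniformly in the realisation of $B_n$, which forces a careful analysis of hitting probabilities for $\til{X}$ against moving targets.

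For (2saus-3), combining (2saus-1), (2saus-2) and (1.4) reduces the claim to
$$\lim_{n\to\infty} \frac{\mathrm{Cap}(\cR_n+A)}{n} \;\asymp\; \Bigl[\inf_\nu \cE_{g\ast g}(\nu)\Bigr]^{-1}.$$
For the upper bound I plug the test measure obtained by pushing forward $\mu_A \otimes \mathrm{Unif}(\cR_n)$ under $(a,x)\mapsto a+x$ into the $g$-energy formula for $\mathrm{Cap}(\cR_n+A)$, where $\mu_A$ is a near-minimiser of $\cE_{g\ast g}$ on $A$; after taking expectation, the key input is the concentration $\frac{1}{n^2}\sum_{u,v\in \cR_n} g(v-u+c) \asymp \frac{1}{n}\,g\ast g(c)$. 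The matching lower bound is obtained by projecting any probability measure on $\cR_n+A$ onto $A$ (averaging out the random anchor in $\cR_n$) and bounding the $g$-energy from below by the $g\ast g$-energy of the projection divided by $n$. The main subtlety here is handling the diagonal $u=v$ contributions, which live at the wrong scale but are absorbed into the overall constants.
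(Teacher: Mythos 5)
Your overall architecture (identify the tree limit as $\mathrm{BCap}(A)$, identify the additive-walk limit as $\lim_n \cp{}{\cR_n+A}/n$, and then compare both to the $g\ast g$-energy of $A$) is the same as the paper's, but two of your three steps either take a needlessly hard route or have a genuine gap.

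For \eqref{2saus-1}, your worry that ``the DFS order along vertices is not itself a measure-preserving transformation'' is misplaced: the shift by one unit in the DFS labelling of the infinite sin-tree $\cT$ \emph{is} a measure-preserving (in fact reversible) transformation --- this is the key observation of Le Gall--Lin, later extended by Zhu and Bai--Wan, and it is exactly the invariance the paper invokes. The rerooting at the spine vertex $u_1$ that you propose instead skips the entire subtree hanging off the root and does \emph{not} preserve the law; the spine-ergodic-theorem-plus-bookkeeping detour you then sketch is both unnecessary and considerably harder to make rigorous than the existing label-shift argument, which turns the sausage volume directly into a Birkhoff sum whose limit is $\sum_{x\in A}\mathbb P(\cT_+^x\cap A=\emptyset)=\mathrm{BCap}(A)$ by Zhu's dual formula \eqref{zhu-reversible}.

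For \eqref{2saus-2} the proposed ``quantitative KSW with moving targets'' is where the real gap lies. You want $\econd{|\til\cR_n+B_n|}{\cR_n}=n\cdot\cp{}{B_n}(1+o(1))$ with an $o(1)$ that is uniform over realisations of a set $B_n=\cR_n+A$ of size $\Theta(n)$; that is a non-trivial uniform rate in the KSW law of large numbers, and you offer no mechanism to obtain it. The paper avoids this entirely: by the Akcoglu--Krengel multiparameter subadditive theorem (Theorem~\ref{thm:akcoglu-krengel}), the two-parameter volume $|\cR^1[0,n_1]+\cR^2[0,n_2]+A|$ admits a joint limit, and the iterated-limit identity in \eqref{rec-fN} gives $f_2(A)=\lim_n f_1(\cR_n+A)/n=\lim_n \cp{}{\cR_n+A}/n$ as an exact consequence of dominated convergence --- no rates are needed. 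This is the cleaner route and you should use it.

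For \eqref{2saus-3}, your upper bound on the energy (pushing forward $\mu_A\otimes\mathrm{Unif}(\cR_n)$) is essentially the paper's Lemma in Section~\ref{sec-cap} giving $\lim_n\cp{}{\cR_n+A}/n\gtrsim\cp{d-4}{A}$. But the reverse bound via ``projecting any probability measure on $\cR_n+A$ onto $A$'' is not a construction: a point $z\in\cR_n+A$ has no canonical decomposition $z=x+a$, a measure on $\cR_n+A$ may concentrate in ways that make any naive averaging useless, and you do not explain how the diagonal terms are ``absorbed into the constants.'' In the paper this direction is the hard one: it comes from the hitting-probability estimates (Theorem~\ref{thm.KSd} and Proposition~\ref{thm.charact}), which rely on the second-moment bound for local times (Lemma~\ref{lem.second.moment}) and the Fitzsimmons--Salisbury / Khoshnevisan--Shi multiparameter martingale construction. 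Without a concrete version of the projection step, your proof of \eqref{2saus-3} is incomplete.
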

Theorem~\ref{theo-2saus} states that two limits in \reff{2saus-3} exist, and
that they are comparable. 
To the best of our knowledge, the observation that the limits are comparable is new, as well
as the characterisation of the limits in terms of the branching capacity. 
Note that \reff{2saus-2} generalises a law of large numbers result for $\cp{}{\cR_n}$ proved by Jain and Orey
\cite{JO69} with the limit being positive if and only if~$d>4$. 
In Lemma~\ref{lem-dual} we give a representation of the limit in terms of escape probabilities in analogy with the classical formula for capacity. We finally note that~\reff{2saus-1} generalises a recent result of Le Gall and Lin~\cite{LGL16} where they treat
the case $A=\{0\}$. More precisely, Le Gall and Lin study a critical BRW conditioned on having exactly $n$ nodes and take the limit as $n\to\infty$. In order to establish limit laws for the volume of its range, they had to introduce the infinite invariant tree we described above as its invariance under the shift of labels makes the corresponding law of large numbers a simple application of Kingman's subadditive ergodic theorem. Here, our starting point
is directly the infinite invariant tree, and the interested reader can check~\cite{LGL16} for linking the
conditioned process and the infinite invariant one. Note that in the case $A=\{0\}$, Le Gall and Lin
provide a dual formula for ${\rm{Bcap}}(\{0\})$ in Theorem 4 of~\cite{LGL16}. This formula was later generalised by  Zhu in~\cite{Zhu16}, where he defined the branching capacity of any finite set $A\subset \Z^d$, as in~\eqref{hitting-zhu}.

\paragraph{Hitting probabilities.}
 Our second result offers another characterisation of hitting probabilities for infinite branching random walks.
In the terminology of~\cite{P96}, we now show that
$\cT_-^x$ and $x+\mathcal R_\infty+\widetilde{\mathcal R}_\infty$ are {\it intersection-equivalent} when $x$ is away from the set to be hit.
\begin{theorem}\label{theo.IE}
Assume $d\ge 5$ and let $\mu$ be an offspring distribution with mean one and finite variance. There exists a positive constant~$C$ so that the following holds. Let $\cR_\infty$ and $\til{\cR}_\infty$ be two independent simple random walk ranges in $\Z^d$. Then for any finite set $A\subset \Z^d$, and any $x$ sufficiently far  from $A$, we have,
\begin{equation}\label{inter-equiv1}
\frac{1}{C}\cdot \mathbb P\big(\mathcal T_-^x \cap A\neq \emptyset \big) \le \mathbb P\Big((x+\mathcal R_\infty+\widetilde{\mathcal R}_\infty) \cap A\neq \emptyset \Big) 
\le C\cdot \mathbb P\big(\mathcal T_-^x \cap A\neq \emptyset \big).
\end{equation}
\end{theorem}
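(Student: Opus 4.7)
The strategy is to show that both sides of \eqref{inter-equiv1} are comparable to $g*g(x)\cdot \mathrm{BCap}(A)$, uniformly in the finite set $A\subset \Z^d$ and in $x$ at distance at least $C\cdot \mathrm{diam}(A)$ from $A$. By \eqref{abp-var}, $\mathrm{BCap}(A)\asymp 1/\inf_\nu \mathcal{E}_{g*g}(\nu)$, so it suffices to compare each hitting probability separately to this common functional of $A$, and then chain the two comparisons.

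For the branching side, I would upgrade Zhu's fixed-$A$ limit \eqref{hitting-zhu} to the uniform two-sided estimate $\mathbb{P}(\mathcal{T}_-^x\cap A\neq \emptyset)\asymp g*g(x)\cdot \mathrm{BCap}(A)$. Zhu's proof exhibits a non-negative ``branching equilibrium measure'' $\widetilde{e}_A$ on $A$ of total mass $\mathrm{BCap}(A)$ together with a Green-type representation $\mathbb{P}(\mathcal{T}_-^x\cap A\neq\emptyset)\asymp \sum_{y\in A}g*g(x-y)\,\widetilde{e}_A(y)$. Since for $x$ far from $A$ the standard Green's function estimates give $g*g(x-y)\asymp g*g(x)$ uniformly in $y\in A$, the claim reduces to verifying that the implicit constants in Zhu's spinal argument depend only on the offspring variance, and not on $A$.

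For the Minkowski side, let $\nu^\ast$ minimise $\mathcal{E}_{g*g}(\nu)$ over probability measures on $A$. The KKT conditions for this convex programme yield $U_{\nu^\ast}(y):=\sum_{z\in A}g*g(z-y)\nu^\ast(z)\geq \mathcal{E}_{g*g}(\nu^\ast)$ for every $y\in A$. I would then study the weighted counting functional
\[
Z:=\sum_{n,m\geq 0}\nu^\ast\!\left(x+X_n+\widetilde{X}_m\right)\mathbf{1}\{x+X_n+\widetilde{X}_m\in A\},
\]
whose first moment equals $\sum_{y\in A}\nu^\ast(y)g*g(y-x)\asymp g*g(x)$. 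For the upper bound, on $\{Z>0\}$ select the pair $(n_0,m_0)$ with $n_0+m_0$ minimal (ties broken lexicographically) such that $y_0:=x+X_{n_0}+\widetilde{X}_{m_0}\in A$. Conditional on $(n_0,m_0,y_0)$, the shifted walks $X'_k:=X_{n_0+k}-X_{n_0}$ and $\widetilde{X}'_k:=\widetilde{X}_{m_0+k}-\widetilde{X}_{m_0}$ are two independent SRWs from $0$ by the joint Markov property, and the sub-family of pairs $\{(n_0+k,m_0+\ell):k,\ell\geq 0\}$ contributes a further expected weighted count of $U_{\nu^\ast}(y_0)\geq \mathcal{E}_{g*g}(\nu^\ast)$ to $Z$. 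Non-negativity of $Z$ then gives $\mathbb{E}[Z]\geq \mathbb{P}(Z>0)\cdot \mathcal{E}_{g*g}(\nu^\ast)$, whence
\[
\mathbb{P}\!\left((x+\mathcal{R}_\infty+\widetilde{\mathcal{R}}_\infty)\cap A\neq \emptyset\right)\leq \frac{\mathbb{E}[Z]}{\mathcal{E}_{g*g}(\nu^\ast)}\lesssim g*g(x)\cdot \mathrm{BCap}(A).
\]
For the lower bound I would apply Paley--Zygmund to $Z$: by splitting $\mathbb{E}[Z^2]$ according to the relative orderings of $(n,n')$ and $(m,m')$ and using independence of $X$ and $\widetilde{X}$ together with the SRW Green's function in each case, one expects $\mathbb{E}[Z^2]\lesssim g*g(x)^2+g*g(x)\cdot \mathcal{E}_{g*g}(\nu^\ast)$, and therefore $\mathbb{P}(Z>0)\geq \mathbb{E}[Z]^2/\mathbb{E}[Z^2]\gtrsim g*g(x)/\mathcal{E}_{g*g}(\nu^\ast)\asymp g*g(x)\cdot \mathrm{BCap}(A)$, provided $x$ is far enough that $g*g(x)\lesssim 1/\mathcal{E}_{g*g}(\nu^\ast)$ (which is exactly the regime meant by ``$x$ sufficiently far from $A$'').

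The main obstacle is the uniform version of Zhu's estimate: the statement \eqref{hitting-zhu} is only pointwise in $A$ and offers no quantitative control on how large $\|x\|$ must be relative to $\mathrm{diam}(A)$. Re-examining the spinal decomposition of $\mathcal{T}_-^x$ with an eye towards uniformity, ensuring that all error terms are dominated by ratios of the form $g*g(x-y)/g*g(x)$ for $y$ in a neighbourhood of the origin, should yield the required uniform bound. A secondary technical point is the second-moment bookkeeping for $Z$: one must split the quadruple sum so that the cross-terms produce the capacity-scale contribution $g*g(x)\cdot \mathcal{E}_{g*g}(\nu^\ast)$ rather than a crude $g*g(x)\cdot |A|$-type estimate, an inequality that makes essential use of the convolutional structure $g*g$ of the Green's function of the Minkowski sum.
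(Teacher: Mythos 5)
Your overall plan — chain both sides through the common quantity $g*g(x)\cdot\mathrm{BCap}(A)$ via Zhu's asymptotics~\eqref{hitting-zhu}, the ASS23 comparison~\eqref{abp-var}, and a two-sided hitting estimate for $x+\cR_\infty+\widetilde{\cR}_\infty$ — is exactly the paper's plan (the paper invokes Theorem~\ref{thm.KSd} with $N=2$ for the Minkowski side). Two remarks on the details.

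First, the concern you flag as the ``main obstacle'' is actually not one. The theorem only asks for a uniform constant $C$ while allowing ``$x$ sufficiently far from $A$'' to depend on $A$. Zhu's pointwise limit~\eqref{hitting-zhu} therefore suffices as stated: for each fixed $A$, once $\|x\|$ is large enough, $\mathbb{P}(\cT_-^x\cap A\neq\emptyset)$ is within a universal factor (say $2$) of $(\sigma^2/2)\,g*g(x)\,\mathrm{BCap}(A)$, and this factor does not depend on $A$. No re-examination of Zhu's spinal decomposition is needed.

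Second, and more seriously, your upper bound for the Minkowski side has a genuine gap. You select the pair $(n_0,m_0)$ minimising $n_0+m_0$ with $x+X_{n_0}+\widetilde X_{m_0}\in A$ and then condition on $(n_0,m_0,y_0)$, claiming that the shifted processes $(X_{n_0+k}-X_{n_0})_k$ and $(\widetilde X_{m_0+\ell}-\widetilde X_{m_0})_\ell$ are fresh independent walks ``by the joint Markov property.'' They are not: the event that $(n_0,m_0)$ is the sum-minimal hitting pair constrains pairs $(n,m)$ with $n+m<n_0+m_0$, and such pairs include indices with $n>n_0$ (paired with $m<m_0$) and with $m>m_0$ (paired with $n<n_0$). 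Hence the event is not measurable with respect to $\cF^X_{n_0}\otimes\cF^{\widetilde X}_{m_0}$, and the conditional law of the shifted walks given $(n_0,m_0,y_0)$ is not that of independent SRWs. This is precisely the obstruction quoted from Salisbury in the paper: for multiparameter processes there is no first hitting time at which any Markov property applies, so the ``sub-family forward count'' step, and with it the inequality $\mathbb{E}[Z]\ge\mathbb{P}(Z>0)\cdot\cE_{g*g}(\nu^*)$, does not follow. The paper's upper bound gets around this by defining the random times one coordinate at a time (as in Section~\ref{sec.KSd}), introducing the hitting measure $\mu$ supported on $A$, and then running a multi-parameter martingale argument: $M(t_1,t_2)=\mathbb{E}[Z_\mu\mid\cF^1_{t_1}\otimes\cF^2_{t_2}]$ dominates $\sum_a\mu(a)G_2(x+X^1_{t_1}+X^2_{t_2}-a)$ pointwise, and Doob's $L^2$ maximal inequality applied coordinate-by-coordinate controls $\mathbb{E}[\sup M^2]$ by $\mathbb{E}[Z_\mu^2]$, which is then bounded via Lemma~\ref{lem.second.moment}. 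Your Paley--Zygmund lower bound, by contrast, is essentially the paper's and is fine once one feeds it the second-moment estimate~\eqref{2moment} with $N=2$.
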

Note that Benjamini, Pemantle and Peres~\cite{BPP95}
focus on the Martin capacity for a general
transient Markov chain on a countable state space. The Martin capacity is associated
with the so-called Martin Kernel $K(x,y)=g(x,y)/g(\rho,y)$, where $\rho$ is the starting site of the chain and $g$ is its Green's function. 
Their main result states that the hitting probability of a set $A$ is within a constant factor of two equal to the Martin capacity of $A$; in particular two Markov chains with comparable Green's functions are intersection equivalent.
The main difficulty in proving Theorem~\ref{theo.IE} is of course the lack of Markov property:
for branching random walk or for additive random walks, 
as Salisbury explains in \cite{S96} {\it "the difficulty is that there may be
no first hitting time at which any kind of Markov property applies"}. Fitzsimmons and Salisbury~\cite{FS89,S96} using
remarkable ideas managed to deal with multivariate processes. 
Here we shall adapt proofs of Khoshnevisan and Shi~\cite{KS99}, inspired by \cite{S96},
which provide estimates on the probability that a
Brownian sheet hits a distant compact set $A\subset \mathbb R^d$, 
in terms of some appropriate (continuous) $\gamma$-capacity of $A$.

\paragraph{Sum of $N$ random walks.} 
One of the achievements of potential theory for Markov processes (in the continuous setting) is to establish 
necessary and sufficient conditions for a transient process to hit a set. Morally, the set should
support a probability measure whose $g$-energy is finite, where~$g$ is the Green's function.
Fitzsimmons and Salisbury in \cite{FS89} managed to build such a measure for
additive Markov processes using random times which are not stopping times.
Their proof requires estimates on the first two moments of the local times and 
their approach is well adapted to tackle a sum of $N$ walks.
The local times process of the sum of~$N$ independent walks $\{X^1,\dots,X^N\}$, is defined as follows: given any $x,z\in \mathbb Z^d$, 
\begin{equation}\label{def-localN}
\ell_{z+X^1+\dots+X^N}(x):=
\sum_{t_1,\dots,t_N} \1\big(z+X^1_{t_1}+\dots+X^N_{t_N}=x\big).
\end{equation}
Let $G_N$ be the $N$-th convolution power of the simple random walk's Green's function $g$
(so that $G_N(x-z)=\E{\ell_{z+X^1+\dots+X^N}(x)}$). Our second moment estimate reads as follows.
\begin{lemma}\label{lem.second.moment}
\emph{
Let $N\ge 1$ and $d>2N$. 
There exists a constant $C=C(d)>0$, such that for any $z,a,b\in \Z^d$, with $\|z\|\ge 2\max(\|a\|,\|b\|)$, we have 
\begin{equation}\label{2moment}
\E{\ell_{z+X^1+\dots+X^N}(a)\ell_{z+X^1+\dots+X^N}(b)}
\leq C\, \Big(G_{N}(z-a) + G_{N}(z-b)\Big)\cdot G_{N}(a-b). 
\end{equation}
}
\end{lemma}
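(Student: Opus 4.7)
The plan is to decompose $\E{\ell(a)\ell(b)}$ by ordering, independently for each walk, the two summation times contributing to $\ell(a)$ and $\ell(b)$. For $\sigma\in\{0,1\}^N$ let $I_j(\sigma)=\{i:\sigma_i=j\}$, and partition the double sum according to whether, for each $i$, $t_i\le s_i$ (the case $\sigma_i=0$) or $t_i>s_i$ (the case $\sigma_i=1$). Writing $\tau_i=\min(t_i,s_i)$ and $\rho_i=|t_i-s_i|$, the Markov property applied at $\tau_i$ to each walk $X^i$ makes $A_i:=X^i_{\tau_i}$ and $D_i:=X^i_{\tau_i+\rho_i}-X^i_{\tau_i}$ independent, with $A_i\sim p_{\tau_i}$ and $D_i\sim p_{\rho_i}$. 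The constraints $z+\sum_iX^i_{t_i}=a$ and $z+\sum_iX^i_{s_i}=b$ become $S+T_1=a-z$ and $S+T_0=b-z$, where $S=\sum_iA_i$ and $T_j=\sum_{i\in I_j(\sigma)}D_i$ are independent. Since $\sum_{k\ge 0}p_k=g$ and the Green's function of the $N$-fold sum coincides with $G_N$, summing over $\tau_i,\rho_i\ge 0$ yields
\[
\Phi(\sigma):=\sum_{s\in\Z^d}G_N(s)\,G_{|I_1(\sigma)|}(a-z-s)\,G_{|I_0(\sigma)|}(b-z-s),
\qquad
\E{\ell(a)\ell(b)}\le\sum_{\sigma}\Phi(\sigma).
\]

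For the extreme choices $\sigma=\mathbf 0$ and $\sigma=\mathbf 1$, one factor is $G_0=\delta_0$ and the sum collapses immediately to $G_N(z-a)G_N(a-b)$ and $G_N(z-b)G_N(a-b)$; both fit inside the target bound. The work lies in the $2^N-2$ mixed cases with $1\le n_0,n_1\le N-1$. I would use the standard estimate $G_k(x)\asymp(1+\|x\|)^{2k-d}$ valid for $1\le k\le N<d/2$, together with the hypothesis $\|z\|\ge 2\max(\|a\|,\|b\|)$, which implies $\|z-a\|\asymp\|z-b\|\asymp\|z\|$ and $\|a-b\|\le\|z\|$, and split the $s$-sum into the three regions $\|s\|\le\|z\|/3$, $\|s\|\ge 3\|z\|$ and $\|z\|/3\le\|s\|\le 3\|z\|$.

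In the two ``far'' regions, both $\|a-z-s\|$ and $\|b-z-s\|$ are comparable to $\max(\|s\|,\|z\|)$, so the product of the three Green's functions reduces to a single power, and a direct computation using $\sum_{\|s\|\le R}(1+\|s\|)^{2N-d}\asymp R^{2N}$ shows each of these regions contributes at most a constant times $\|z\|^{4N-2d}\asymp G_N(z)^2$. In the intermediate region one has $G_N(s)\asymp G_N(z)$, so pulling this factor out and using the convolution identity
\[
\sum_{s\in\Z^d}G_{n_0}(a-z-s)\,G_{n_1}(b-z-s)=(G_{n_0}\ast G_{n_1})(a-b)=G_N(a-b)
\]
(which follows from $G_N=g^{\ast N}$ and the symmetry $G_k(-x)=G_k(x)$) yields a contribution of size at most $C\,G_N(z)\,G_N(a-b)$.

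To close the argument, I would use $\|a-b\|\le\|z\|$ together with the monotonicity of $G_N$ in the norm to deduce $G_N(z)\le C\,G_N(a-b)$, which promotes the $G_N(z)^2$ bound from the far regions to the same $C\,G_N(z)G_N(a-b)$ form. Summing the $2^N$ choices of $\sigma$, absorbing constants, and replacing $G_N(z)$ by $G_N(z-a)+G_N(z-b)$ (which differ from $G_N(z)$ only by constants under the standing hypothesis) gives the claimed inequality. The main obstacle is the intermediate region $\|s\|\asymp\|z\|$, where none of the three Green's function factors dominates; it is rescued only by the convolution identity $G_{n_0}\ast G_{n_1}=G_N$, which is precisely what produces the $G_N(a-b)$ factor needed on the right-hand side.
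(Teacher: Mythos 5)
Your proof is correct, and it arrives at the same family of convolution functionals as the paper but via a genuinely different decomposition. The paper bounds $V_N(z,a,b)$ by an induction on $N$: it peels off one walk at a time, uses a one-dimensional decomposition for that walk, and proves a convolution identity (their Claim~\ref{cl:smallterms}) to show the resulting coefficients telescope into the explicit formula of Lemma~\ref{lem.second}. You instead perform all $N$ reparametrizations $(\tau_i,\rho_i,\sigma_i)$ simultaneously, indexed by $\sigma\in\{0,1\}^N$, and read off directly that $\E{\ell(a)\ell(b)}\le\sum_\sigma \Phi(\sigma)$ with $\Phi(\sigma)=\sum_s G_N(s)G_{|I_1(\sigma)|}(a-z-s)G_{|I_0(\sigma)|}(b-z-s)$; in the paper's notation $\Phi(\sigma)=F_{N,|I_0(\sigma)|,|I_1(\sigma)|}(z,a,b)$, and indeed, after using the symmetry $F_{k,\ell,m}(z,a,b)=F_{k,m,\ell}(z,b,a)$ and Pascal's rule, the paper's Lemma~\ref{lem.second} reduces to exactly your bound $\sum_{n_0=0}^N\binom{N}{n_0}F_{N,n_0,N-n_0}(z,a,b)$. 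Your ``parallel'' decomposition is more direct and makes the combinatorial structure transparent, at the cost of requiring a slightly more careful bookkeeping of over-counting on the diagonals $t_i=s_i$ (which you handle correctly by passing to an inequality). The final estimate of each term is essentially the paper's Claim~\ref{claim.second}: the paper uses two regions ($\|u\|$ small versus large compared to $\|z\|$), bounding $G_N(u)\lesssim G_N(z)$ whenever $\|u\|\gtrsim\|z\|$ and applying $G_{n_0}*G_{n_1}=G_N$; your three regions contain the same ideas, with your far region $\|s\|\ge 3\|z\|$ being superfluous since it could be merged with the intermediate one by the same monotonicity bound. The promotion of the $G_N(z)^2$ contribution to $G_N(z)G_N(a-b)$ via $\|a-b\|\le\|z\|$ is also present (implicitly) in the paper's Claim~\ref{claim.second}. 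In short: same target, same terminal estimate, cleaner path to the intermediate decomposition.
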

We now define the $\gamma$-capacity, denoted  $\textrm{Cap}_{\gamma}$, for any $\gamma>0$, by replacing $g$ in~\eqref{def-capacity} by the kernel $K_\gamma(x,y)=(1+\|x-y\|)^{-\gamma}$. More precisely, for a finite and nonempty subset $A\subset \mathbb Z^d$, we define
\begin{equation}\label{intro-5}
\frac{1}{\cp{\gamma}{A}}= \inf \Big\{\cE_{K_\gamma}(\nu):\ \nu \textrm{ probability measure on }A\Big\}.
\end{equation}
Our third result is a natural generalisation of~Theorem~\ref{theo-2saus}. 
\begin{theorem}\label{thm.fAN} 
Let $N\ge 1$, and $d>2N$. Let $(\mathcal R_\infty^i)_{i=1,\dots,N}$ be $N$ 
independent simple random walk ranges on $\Z^d$. 
There exists a positive set-function $f_N$, such that for any finite and nonempty $A\subset \mathbb Z^d$, almost surely (with the convention that $f_0(A) = |A|$), 
\begin{equation}\label{intro-6}
f_N(A):=\lim_{n\to\infty}  \frac{\big| \mathcal R_n^1 +\dots+\mathcal R_n^N +A\big|}{ n^N}
 \asymp \cp{d-2N}{A}.
\end{equation}
Furthermore, the following limit exists almost surely and satisfies
\begin{equation}\label{intro-7}
 \lim_{n\to \infty} \frac{\cp{d-2(N-1)}{\mathcal R_n^1+A}}{n}\asymp \cp{d-2N}{A}.
\end{equation}
\end{theorem}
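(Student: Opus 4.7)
The plan is to prove \eqref{intro-6} and \eqref{intro-7} by induction on $N$. The base case $N=1$ follows from the Kesten--Spitzer--Whitman law \reff{Whit.Sp} together with the elementary comparison $\cp{d}{B}\asymp |B|$, valid uniformly over finite sets $B\subset\Z^d$. The engine of the induction is a \emph{one-walk lemma}: rolling one extra simple random walk of length~$n$ multiplies a $\gamma$-capacity by a factor of order $n$ while shifting the kernel exponent from $\gamma$ to $\gamma-2$; this is exactly~\eqref{intro-7} with $\gamma=d-2(N-1)$ and $\gamma-2=d-2N$, and the requirement $\gamma>2$ for the relevant convolution to converge matches the standing assumption $d>2N$.

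I would prove \eqref{intro-7} first, directly from the variational formula~\eqref{intro-5}. For the lower bound $\cp{\gamma}{\cR_n^1+A}\gtrsim n\cdot\cp{\gamma-2}{A}$, take a near-minimiser $\mu$ of the $K_{\gamma-2}$-energy on $A$ and consider the test measure
\[
\nu_n(z):=\frac{1}{n+1}\sum_{k=0}^n \mu(z-X^1_k),\qquad z\in \cR_n^1+A.
\]
A first-moment calculation using the convolution identity $(g*K_\gamma)(c)\asymp K_{\gamma-2}(c)$, valid exactly when $\gamma>2$, yields $\E{\cE_{K_\gamma}(\nu_n)}\asymp \cE_{K_{\gamma-2}}(\mu)/n$, whence $\cp{\gamma}{\cR_n^1+A}/n\gtrsim\cp{\gamma-2}{A}$ in expectation. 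Upgrading to almost-sure convergence is done through a variance estimate based on Lemma~\ref{lem.second.moment}, combined with Borel--Cantelli along a geometric subsequence and monotonicity in $n$. For the reverse inequality, I would push an approximate optimiser $\nu$ on $\cR_n^1+A$ forward to a measure on $A$ via $\mu(a)\propto\sum_k\nu(X^1_k+a)$ and bound $\cE_{K_{\gamma-2}}(\mu)$ from above by $Cn\cdot\cE_{K_\gamma}(\nu)$ using Jensen's inequality together with the same convolution identity.

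Equation \eqref{intro-6} at level $N$ then follows by iterating the one-walk lemma. Setting $B_k:=\cR_n^{k+1}+\cdots+\cR_n^N+A$ so that $B_N=A$ and $B_0=\cR_n^1+\cdots+\cR_n^N+A$, one applies a conditional form of~\eqref{intro-7} at each step to pass from $\cp{d-2(N-k)}{B_k}$ to $\cp{d-2(N-k-1)}{B_{k-1}}/n$, and finishes with $\cp{d}{B_0}\asymp |B_0|$ at the top; this yields $|B_0|/n^N\asymp\cp{d-2N}{A}$. Existence of the almost-sure limit $f_N(A)$ itself follows from the multi-parameter (Ak\c{c}oglu--Krengel) subadditive ergodic theorem applied to $F(n_1,\dots,n_N):=|\cR^1[0,n_1]+\cdots+\cR^N[0,n_N]+A|$, which enjoys the $2^N$-cube subadditivity $F(2m_1,\dots,2m_N)\le \sum_{\varepsilon\in\{0,1\}^N}F^{(\varepsilon)}(m_1,\dots,m_N)$ inherited from the distributive law $(U\cup V)+W=(U+W)\cup(V+W)$, each $F^{(\varepsilon)}$ being equal in distribution to a translate of $F(m_1,\dots,m_N)$ obtained by splitting each walk into two halves.

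The main obstacle is the upper bound on $\cp{\gamma}{\cR_n^1+A}$, equivalently a uniform lower bound on the $K_\gamma$-energy of \emph{every} probability measure on the random thick set $\cR_n^1+A$. Whereas the lower bound requires only a single well-chosen test measure, the upper bound must contend with the fact that a point $z\in \cR_n^1+A$ may admit many representations $z=X^1_k+a$, so that the naive pushforward $\mu$ overcounts; controlling this multiplicity is precisely where the second-moment estimate of Lemma~\ref{lem.second.moment} enters, and the same moment bound will be needed to propagate quantitative error estimates through the induction, where the intermediate sets $B_k$ are themselves random and grow with~$n$.
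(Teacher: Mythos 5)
Your treatment of the existence of $f_N$ (via the Ak\c{c}oglu--Krengel multi-parameter subadditive theorem applied to $L(\prod_i\{n_i,\dots,m_i\})=|\cR^1[n_1,m_1]+\cdots+\cR^N[n_N,m_N]+A|$) and of the lower bound in~\eqref{intro-7} (test measure $\nu_n=\frac{1}{n+1}\sum_k\mu(\cdot-X_k^1)$, Jensen, and the convolution estimate $g*K_\gamma\asymp K_{\gamma-2}$) coincide with the paper's Section~\ref{sec-subadditive}. The genuine gap is the \emph{upper} bound $\cp{d-2(N-1)}{\cR_n^1+A}\lesssim n\cdot\cp{d-2N}{A}$. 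Your pushforward strategy, sending a near-optimiser $\nu$ on $\cR_n^1+A$ to $\mu(a)\propto\sum_k\nu(X_k^1+a)$, does not go through: $\nu$ is itself a random measure determined by $\cR_n^1$, so it is \emph{not} independent of the walk, and the convolution identity $g*K_\gamma\asymp K_{\gamma-2}$ governs \emph{expected} local times, not a single realisation. Trying to compensate with Lemma~\ref{lem.second.moment} only controls moments of fixed test statistics, not an infimum over all $\nu$ supported on the random set; the requisite exchange of $\inf_\nu$ with $\mathbb E$ is precisely what fails. The paper circumvents this entirely by routing the upper bound through hitting probabilities: the upper bound of Theorem~\ref{thm.KSd} constructs a measure $\mu$ on $A$ as the hitting distribution at the non-stopping random times $T_1,\dots,T_N$, runs an $N$-parameter martingale, and applies Doob's $L^2$-inequality to produce the energy bound $\cE_{d-2N}(\mu)^2\,\pr{T_1<\infty}\lesssim G_N(z)\,\cE_{d-2N}(\mu)$ directly; combining this with Proposition~\ref{thm.charact} then delivers~\eqref{intro-7}. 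That Fitzsimmons--Salisbury mechanism is not an optimisation you can replicate by pushing forward an energy minimiser.

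A secondary but real error: the claim ``$\cp{d}{B}\asymp|B|$ uniformly over finite $B\subset\Z^d$'' is false. Since $\sum_{1\le\|u\|\le R}(1+\|u\|)^{-d}\asymp\log R$, the uniform measure on a set of diameter $R$ has $K_d$-energy $\lesssim(\log R)/|B|$, while the diagonal always forces energy $\geq 1/|B|$, so $\cp{d}{B}$ lies between $|B|/\log(\mathrm{diam}\,B)$ and $|B|$ and the two are not comparable uniformly. This matters because your intermediate sets $B_0=\cR_n^1+\cdots+\cR_n^N+A$ have diameter growing like $\sqrt n$, so anchoring the top of the chain at $\cp{d}{B_0}\asymp|B_0|$ introduces an uncontrolled $\log n$ loss. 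The paper avoids it by running the recursion~\eqref{rec-fN}, $f_N(A)=\lim_n f_{N-1}(\cR_n^N+A)/n$, with base case $f_1(A)=\cp{}{A}\asymp\cp{d-2}{A}$ from Kesten--Spitzer--Whitman, never invoking $\cp{d}$ of a large set. You should adopt the same anchoring, and then establish~\eqref{intro-7} by a separate route (e.g.\ the paper's hitting-probability argument) before feeding it back into the recursion.
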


\begin{remark}
	\rm{
	In Section~\ref{sec:multiparameter} we give a dual representation of $f_N(A)$ in terms of escape probabilities for sums of walks analogously to the case of capacity for $N=1$. 
	}
\end{remark}

In our next result we relate  hitting probabilities of a set by the sum of $N$ walks to its $(d-2N)$-capacity.

\begin{theorem}\label{thm.KSd}
Let $N\ge 1$ and $d\ge 1+2N$. Let $(\mathcal R_\infty^i)_{i=1,\dots,N}$, be $N$ independent simple random walk ranges
in $\Z^d$.
There exists a positive constant $C$, such that for any finite set $  A\subset \mathbb Z^d$, containing the origin, and any $z\in \mathbb Z^d$, with $\|z\|\ge 2 \cdot {\rm diam}(A)$,
\begin{equation}\label{hit.Cd-4}
\frac1C\cdot  \frac{\cp{d-2N}{A}}{ \|z\|^{d-2N}} \le  \mathbb P\Big((z + \mathcal R_\infty^1+\dots+\mathcal R_\infty^N) \cap A \neq \emptyset\Big)  \le C\cdot \frac{\cp{d-2N}{A}}{ \|z\|^{d-2N}}.
\end{equation}
\end{theorem}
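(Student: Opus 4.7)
My plan is to prove both inequalities in \eqref{hit.Cd-4} via a weighted second-moment analysis of the local time of the sum of walks. Write $L(x):=\ell_{z+X^1+\cdots+X^N}(x)$ and $H:=\bigl\{(z+\cR^1_\infty+\cdots+\cR^N_\infty)\cap A\ne\emptyset\bigr\}$, let $\nu$ be a probability measure on $A$ attaining the infimum in $\cp{d-2N}{A}^{-1}=\inf_\nu\cE_{K_{d-2N}}(\nu)$, and set $S:=\sum_{x\in A}\nu(x)L(x)$. Two inputs are used throughout: the local CLT gives $G_N(y)\asymp(1+\|y\|)^{2N-d}$ when $d>2N$, so that $G_N(x-y)\asymp K_{d-2N}(x,y)$ up to multiplicative constants; and the assumption $0\in A$, $\|z\|\ge 2\operatorname{diam}(A)$ yields $\|z-x\|\asymp\|z\|$ for every $x\in A$, hence $G_N(z-x)\asymp\|z\|^{2N-d}$.

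For the lower bound, compute $\E{S}=\sum_x\nu(x)G_N(z-x)\asymp\|z\|^{2N-d}$ and use Lemma~\ref{lem.second.moment} together with $G_N(z-a),G_N(z-b)\asymp\|z\|^{2N-d}$ for $a,b\in A$:
\[
\E{S^2}=\sum_{a,b\in A}\nu(a)\nu(b)\,\E{L(a)L(b)}\le C\|z\|^{2N-d}\cE_{K_{d-2N}}(\nu)=C\|z\|^{2N-d}/\cp{d-2N}{A}.
\]
Since $S=0$ off $H$, the Paley--Zygmund inequality gives $\pr{H}\ge\E{S}^2/\E{S^2}\gtrsim\cp{d-2N}{A}/\|z\|^{d-2N}$.

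For the upper bound, which I would prove along the lines of Salisbury and Khoshnevisan--Shi, the starting point is the identity $\pr{H}=\E{S}/\E{S\mid H}$ (valid because $S$ vanishes off $H$); it then suffices to prove $\E{S\mid H}\gtrsim 1/\cp{d-2N}{A}$. The key is to produce, conditional on $H$, a fresh $N$-walk sum starting from a point close to $A$. For each $i$, the first entrance time
\[
\sigma_i:=\inf\!\Bigl\{t\ge 0:X^i_t\in A-z-{\textstyle\sum_{j\ne i}\cR^j_\infty}\Bigr\}
\]
is a stopping time for $X^i$ in the filtration enlarged by $(X^j)_{j\ne i}$, so strong Markov applied to $X^i$---together with the mutual independence of the walks---shows that the post-$\sigma_i$ increments $(X^i_{\sigma_i+\cdot}-X^i_{\sigma_i})_{i=1}^N$ are jointly distributed as $N$ independent fresh simple random walks, independent of $\cF^*:=\sigma(X^i|_{[0,\sigma_i]},\sigma_i,X^i_{\sigma_i})_{i=1}^N$. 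Choosing $x_0\in A$ to be a hit point realised using the $(X^i_{\sigma_i})$'s, the restriction of $L(y)$ to tuples with $t_i\ge\sigma_i$ has conditional mean $\gtrsim G_N(y-x_0)$. Combined with the KKT optimality condition $\sum_y K_{d-2N}(x_0,y)\nu(y)\ge 1/\cp{d-2N}{A}$ for every $x_0\in A$ and $G_N\gtrsim K_{d-2N}$, this yields $\E{S\mid\cF^*,H}\gtrsim 1/\cp{d-2N}{A}$; averaging completes the upper bound.

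The main obstacle is precisely this "after the hit" step in the absence of a joint Markov property. There is no single stopping time capturing the first hit: every natural candidate (e.g.\ the $i$-th coordinate of the lex-smallest tuple $(t_1,\dots,t_N)$ with $z+\sum_i X^i_{t_i}\in A$) is defined in terms of the full trajectories of all walks at once. The coordinatewise definition of $\sigma_i$ reduces matters to ordinary one-parameter strong Markov arguments, but introduces the new difficulty that $w:=z+\sum_i X^i_{\sigma_i}$ need not itself lie in $A$, since different walks may envision different completions of the hit. One has to locate an actual hit point $x_0\in A$ whose Riesz potential $K_{d-2N}(x_0,\cdot)$ is comparable to that of $w$ and separately verify the joint independence of the post-$\sigma_i$ increments across all $N$ walks. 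Carrying out these two steps cleanly in the present discrete additive-walk setting is where the machinery of \cite{S96,KS99} enters.
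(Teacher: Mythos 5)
Your lower bound argument is exactly the paper's: the weighted local time $S=Z_\nu$, the first moment via $G_N(z-x)\asymp\|z\|^{2N-d}$, the second moment via Lemma~\ref{lem.second.moment}, and Paley--Zygmund. No issues there.

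Your upper bound, however, takes a genuinely different route from the paper's, and the route as written has a real gap. You define the random times
\[
\sigma_i:=\inf\Bigl\{t\ge 0:X^i_t\in A-z-\textstyle\sum_{j\ne i}\cR^j_\infty\Bigr\},
\]
and assert that the post-$\sigma_i$ increments $(X^i_{\sigma_i+\cdot}-X^i_{\sigma_i})_{i=1}^N$ are \emph{jointly} distributed as $N$ fresh independent walks, independent of $\cF^*$. This does not follow from the strong Markov property applied coordinate by coordinate, because the $\sigma_i$'s are defined circularly: $\sigma_1$ depends on the \emph{entire} trajectory of $X^2,\dots,X^N$ (including the parts after $\sigma_2,\dots,\sigma_N$), while $\sigma_2$ depends on the entire trajectory of $X^1$ (including the part after $\sigma_1$). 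Take $N=2$: strong Markov for $X^1$ gives that $X^1_{\sigma_1+\cdot}-X^1_{\sigma_1}$ is a fresh walk independent of $\sigma(X^2)\vee\cF^1_{\sigma_1}$; but this $\sigma$-field already contains all of $X^2$, so it is \emph{not} $\cF^*$. Conversely, the stopped data $X^2|_{[0,\sigma_2]}$ that you put into $\cF^*$ is a function of the full $X^1$, hence carries information about $X^1_{\sigma_1+\cdot}-X^1_{\sigma_1}$. The two strong Markov statements live in incompatible filtrations and do not assemble into the joint conditional independence you need. This is precisely the obstruction Salisbury flags (``there may be no first hitting time at which any kind of Markov property applies''), and your construction does not escape it.

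The paper's proof circumvents the problem rather than solving it: it defines the random times $T_1,\dots,T_N$ \emph{sequentially} (each $T_i$ is picked after $T_1,\dots,T_{i-1}$ are fixed) and, crucially, never attempts a post-$T$ Markov decomposition. Instead it forms the multi-parameter Doob martingale $M(t_1,\dots,t_N)=\E{Z_\mu\mid\cF^1_{t_1}\otimes\cdots\otimes\cF^N_{t_N}}$, proves the deterministic pointwise bound
\[
M(t_1,\dots,t_N)\ \ge\ \sum_{a\in A}\mu(a)\,G_N\bigl(z+X^1_{t_1}+\cdots+X^N_{t_N}-a\bigr),
\]
plugs in the non-stopping times $(T_1,\dots,T_N)$ (which is legitimate for a supremum, whatever their measurability status), and then controls $\E{\sup M^2}$ by iterating Doob's $L^2$-inequality one parameter at a time, landing on $\E{Z_\mu^2}\lesssim G_N(z)\,\cE_{d-2N}(\mu)$ via Lemma~\ref{lem.second.moment} again. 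No Markov property at the hitting time is used anywhere. If you want to salvage your outline, you should abandon the conditional-freshness claim and replace the ``produce a fresh walk after the hit'' step with this maximal-inequality mechanism; otherwise the step $\E{S\mid H}\gtrsim 1/\cp{d-2N}{A}$ remains unproved.
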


Note that the case $N=1$ of Theorem~\ref{thm.KSd} is well-known, 
see e.g.~\cite{L91} or~\cite{BPP95} for a more precise result. 
In addition to Theorem~\ref{thm.KSd} we have the following proposition, which makes the link between the probability of hitting a set $A$ for a sum of ranges, and the ergodic limit appearing in \reff{intro-7} of
Theorem~\ref{thm.fAN}.

\begin{proposition}\label{thm.charact}
Let $N\ge 2$ and $d> 2N$. Suppose that $\cR^1,\ldots, \cR^N$, are $N$ independent simple random walk ranges in $\Z^d$. There exist positive constants $c_1$ and $c_2$, 
such that for any finite nonempty set $A\subset \mathbb Z^d$,
\begin{equation}\label{hit.fA}
\liminf_{\|z\|\to \infty} \mathbb \|z\|^{d-2N} \cdot \mathbb P\Big((z + \mathcal R_\infty^1+\dots+\mathcal R_\infty^N) \cap  A \neq \emptyset\Big)\ge c_1\cdot\lim_{n\to \infty} \frac{\cp{d-2(N-1)}{\mathcal R_n^1+A}}{n},
\end{equation}
and
\begin{equation}\label{hit.fA.bis}
 \limsup_{ \| z\| \to \infty} \mathbb \|z\|^{d-2N} \cdot \mathbb P\Big((z + \mathcal R_\infty^1+\dots+\mathcal R_\infty^N) \cap A \neq \emptyset\Big)
   \le c_2\cdot \lim_{n\to \infty} \frac{\cp{d-2(N-1)}{\mathcal R_n^1+A}}{n}.
\end{equation}
\end{proposition}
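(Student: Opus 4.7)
Both bounds rest on the following conditioning identity. Let $q_n(z,A) := \pr{(z + \cR^1_n + \cR^2_\infty + \cdots + \cR^N_\infty) \cap A \neq \emptyset}$ and $h(w, B) := \pr{(w + \cR^2_\infty + \cdots + \cR^N_\infty) \cap B \neq \emptyset}$. The independence of the walks together with the symmetry $-\cR^1_n \stackrel{d}{=} \cR^1_n$ give $q_n(z,A) = \E{h(z, A - \cR^1_n)}$. The plan is to choose $n = \lfloor \alpha \|z\|^2 \rfloor$ for a small fixed $\alpha > 0$, so that with high probability $A - \cR^1_n$ is a finite set of diameter much smaller than $\|z\|$ containing the origin, and then apply Theorem~\ref{thm.KSd} to the sum of $N-1$ walks hitting $A - \cR^1_n$. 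Throughout, write $L(A) := \lim_{n \to \infty} \cp{d-2(N-1)}{\cR^1_n + A}/n$, whose a.s.\ existence is \reff{intro-7}.

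\textbf{Lower bound.} Since $p(z,A) \geq q_n(z,A)$, it suffices to bound $q_n$ below. On the event $\cE_n := \{\mathrm{diam}(\cR^1_n) \leq \|z\|/4\}$, one has $\mathrm{diam}(A - \cR^1_n) \leq \|z\|/2$ for $\|z\|$ large, so Theorem~\ref{thm.KSd} applied to the $N-1$ walks $\cR^2,\dots,\cR^N$ yields $h(z, A - \cR^1_n) \geq c \, \cp{d-2(N-1)}{A - \cR^1_n}/\|z\|^{d-2(N-1)}$. Multiplying by $\|z\|^{d-2N}$ and using $n \asymp \alpha \|z\|^2$,
$$\|z\|^{d-2N} \cdot p(z,A) \;\geq\; c \alpha \cdot \E{\frac{\cp{d-2(N-1)}{A - \cR^1_n}}{n} \, \1_{\cE_n}}.$$
By \reff{intro-7} the integrand converges a.s.\ to $L(A)$ and is uniformly bounded by $2|A|$; by dominated convergence together with Donsker's invariance principle (which gives $\pr{\cE_n} \to p_\alpha > 0$, the probability that a $d$-dimensional Brownian motion on $[0,1]$ has diameter at most $1/(4\sqrt{\alpha})$), the right-hand side converges to $c \alpha p_\alpha L(A)$, establishing \reff{hit.fA} with $c_1 = c \alpha p_\alpha$.

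\textbf{Upper bound.} Decompose $\cR^1_\infty = \bigcup_{k \geq 0} \cR^1[kn, (k+1)n]$. By the union bound and the strong Markov property of $X^1$ at time $kn$,
$$p(z,A) \;\leq\; \sum_{k=0}^\infty \E{q_n(z + X^1_{kn}, A)}.$$
Applying the conditioning identity with Theorem~\ref{thm.KSd} for $N-1$ walks, handling the atypical event $\{\mathrm{diam}(\cR^1_n) > \|w\|/4\}$ by a Gaussian tail bound on the range plus the trivial bound $q_n \leq 1$, and using $\E{\cp{d-2(N-1)}{A - \cR^1_n}} \leq C n L(A)$ (from \reff{intro-7} and bounded convergence), one obtains the pointwise estimate $q_n(w, A) \leq C \min\!\big(1, n L(A)/\|w\|^{d-2(N-1)}\big)$. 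A standard Green's-function computation — valid precisely because $d > 2N$ — gives
$$\sum_{k \geq 0} \E{\|z + X^1_{kn}\|^{-(d-2N+2)}} \;\asymp\; \frac{1}{n \|z\|^{d-2N}},$$
while the truncation to the trivial bound $1$ contributes a strictly lower-order term. Inserting these estimates yields $p(z,A) \leq C L(A)/\|z\|^{d-2N}$, proving \reff{hit.fA.bis} with $c_2 = C$.

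\textbf{Main difficulty.} The principal technical point is to control the atypical contributions in both bounds: in the lower bound, one must verify that $\cE_n$ has asymptotically positive probability and that dominated convergence applies uniformly enough to extract the a.s.\ limit $L(A)$ from the restricted expectation; in the upper bound, one must simultaneously rule out the event that $\cR^1_n$ is too spread out (so Theorem~\ref{thm.KSd} fails) and the event that the iterated positions $X^1_{kn}$ drift atypically close to $-z$ (where the capacity bound is replaced by the trivial one). The strict inequality $d > 2N$ is essential: it ensures that the series over $k$ converges with exactly the exponent $\|z\|^{-(d-2N)}$ and that all the power-law convolutions in the Green's-function estimates are of the right order.
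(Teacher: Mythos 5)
Your lower-bound argument follows essentially the same path as the paper's: condition on a finite-time piece of one walk, apply Theorem~\ref{thm.KSd} to the remaining $N-1$ walks, and extract the ergodic limit. One imprecision: you invoke dominated convergence to pass to the limit of $\E{\tfrac{\cp{d-2(N-1)}{A-\cR^1_n}}{n}\,\1_{\cE_n}}$, but the indicator $\1_{\cE_n}=\1(\mathrm{diam}(\cR^1_n)\le\|z\|/4)$ has no a.s.\ limit as $n=\lfloor\alpha\|z\|^2\rfloor\to\infty$ (only a distributional one via Donsker), so the integrand $Y_n\1_{\cE_n}$ does not converge a.s.\ and dominated convergence does not apply as stated. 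The conclusion is still correct, e.g.\ because $Y_n\to L(A)$ in $L^1$ so $\E{Y_n\1_{\cE_n}}-L(A)\pr{\cE_n}\to 0$; the paper instead keeps the full expectation $\E{Y_n}\to L(A)$ and controls $\E{Y_n\1_{\cE_n^c}}$ by Cauchy--Schwarz and the Gaussian tail $\pr{\cE_n^c}\lesssim e^{-c/\alpha}$, taking $\alpha$ small. This is a minor issue of justification, not a real gap.

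\textbf{Upper bound: genuine gap.} The claimed pointwise bound $q_n(w,A)\le C\min\bigl(1,\,nL(A)/\|w\|^{d-2(N-1)}\bigr)$ does not follow from your argument. What you actually get is
$q_n(w,A)\le C\,nL(A)/\|w\|^{d-2(N-1)}+C\,e^{-c\|w\|^2/n}$ (plus the trivial bound $1$),
and these cannot be merged into the $\min$: at $\|w\|\asymp\sqrt n$ the exponential term is of constant order while $nL(A)/\|w\|^{d-2(N-1)}\asymp L(A)\,n^{-(d-2N)/2}$ is vanishingly small. The resulting error term, after summing over time blocks, is \emph{not} lower order. Indeed, with $n\asymp\alpha\|z\|^2$,
\begin{equation*}
\sum_{k\ge0}\E{e^{-c\|z+X^1_{kn}\|^2/n}\wedge 1}\;\asymp\;\pr{X^1\text{ ever enters }B(-z,\sqrt n)}\;\asymp\;\alpha^{(d-2)/2},
\end{equation*}
which is a positive constant independent of $\|z\|$. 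Thus your method yields $p(z,A)\lesssim L(A)/\|z\|^{d-2N}+\alpha^{(d-2)/2}$, and the second term does not go to $0$ as $\|z\|\to\infty$, so \reff{hit.fA.bis} does not follow. Heuristically, the issue is that your time-block decomposition cannot say anything useful about those blocks in which $X^1_{kn}$ happens to wander near $-z$; the trivial bound $q_n\le1$ on those blocks is too costly. The paper avoids this by decomposing the range of the extra walk $\cR_\infty$ spatially rather than temporally: one sets $r_i=2^i\|z\|$ and writes $\cR_\infty=\bigcup_i\cR_{(i)}$ where $\cR_{(i)}$ consists of the excursions of $X$ inside the annulus at scale $r_i$ around $z$. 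Each $\cR_{(i)}+A$ is then split into subsets of diameter $r_i/100$, to each of which Theorem~\ref{thm.KSd} applies at distance $\asymp r_i$ from $z$, giving a factor $G_{N-1}(r_i)$. Subadditivity of $\gamma$-capacity, Green's-function control of the number of excursions at each scale, and claim~\eqref{claim.i} (proved via the same $L^2$-boundedness of $\cp{\gamma}{\cR_n+A}/n$ that you use) then sum geometrically over $i$ to give exactly $\lesssim\widehat f_N(A)\,G_N(z)$. The key point is that the spatial decomposition always places the target set at the right distance from $z$, so Theorem~\ref{thm.KSd} is never vacuous and there is no analogue of the constant-order error you incur.
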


Observe that~\eqref{intro-7} indeed follows from Theorem~\ref{thm.KSd} and Proposition~\ref{thm.charact}.\paragraph{Critical dimension four.}
We now briefly discuss the case of dimension four which is critical for the capacity of the range. By analogy with the case of the volume in dimension two, first considered in Spitzer's original paper~\cite{Sp64}, and then by Le Gall~\cite{LG90} and Port~\cite{Port65}, one can expect that in the asymptotic development of
$\mathbb E[\cp{}{\mathcal R_n +A}]$, only the second order term should depend on $A$ (and be related to a properly defined notion of branching capacity). 
Here we do not pursue such a precise result,
but notice that indeed the first order term does not depend on $A$. 

\begin{proposition}\label{thm.d4}
Let $\cR$ be a simple random walk range in $\Z^4$. Then for any finite and nonempty set $A\subset \Z^4$, one has 
\begin{equation}\label{d4-capacity}
\lim_{n\to\infty}  \frac{\log n}{n}\E{\cp{}{\mathcal R_n+A}}= \frac{\pi^2}{8}. 
\end{equation}
\end{proposition}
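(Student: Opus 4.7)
The strategy I would follow mirrors the classical analysis of $\E{|\mathcal R_n + A|}$ in $\Z^2$ (Dvoretzky--Erd\H{o}s, Spitzer, Le Gall, Port), where the leading asymptotic is likewise $A$-independent and the $A$-dependence enters only at sub-leading order. This analogy is flagged by the authors in the paragraph preceding the proposition.

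\emph{Lower bound.} Translation invariance of capacity lets us assume $0\in A$, so $\mathcal R_n\subseteq\mathcal R_n + A$, and monotonicity of capacity yields $\cp{}{\mathcal R_n + A}\ge\cp{}{\mathcal R_n}$. Combined with the known asymptotic $(\log n/n)\E{\cp{}{\mathcal R_n}}\to\pi^2/8$ in $\Z^4$ (the case $A=\{0\}$, established in prior work on the capacity of the range in the critical dimension), this gives $\liminf_n(\log n/n)\E{\cp{}{\mathcal R_n+A}}\ge\pi^2/8$.

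\emph{Upper bound.} I would use the characterization $\cp{}{B}=c_4^{-1}\lim_{\|x\|\to\infty}\|x\|^2\,\prstart{\tau_B<\infty}{x}$ valid in $\Z^4$, with $c_4$ the Green's function constant ($g(x)\sim c_4/\|x\|^2$). Taking expectation and exchanging the limit with expectation, one gets
\[
\E{\cp{}{\mathcal R_n+A}}\;=\;c_4^{-1}\lim_{\|x\|\to\infty}\|x\|^2\cdot\pr{\tilde X\text{ from }x\text{ hits }\mathcal R_n+A},
\]
with $X,\tilde X$ independent simple random walks. The hitting event rewrites as $\{(\tilde X_m-X_k)_{m\ge 0,\,k\le n}\cap A\neq\emptyset\}$: the two-parameter additive walk $\tilde X - X$ hits the finite set $A$. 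The crux is then to show that as $n\to\infty$,
\[
\pr{(\tilde X - X)\cap A\neq\emptyset}\;=\;(1+o(1))\,\pr{(\tilde X - X)\cap\{0\}\neq\emptyset},
\]
which combined with the $A=\{0\}$ case closes the upper bound. Intuitively, in the critical dimension the two-parameter walk is diffuse enough that hitting a bounded set is asymptotically equivalent to hitting its ``center point''.

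The main obstacle, as I expect it, is establishing the last equality rigorously: one needs sharp two-walk intersection estimates in $\Z^4$ (of order $1/\log n$ with the constant $\pi^2/8$) together with a correlation / inclusion--exclusion argument showing that the extra shifts by $A\setminus\{0\}$ introduce only lower-order corrections. The key input would be the logarithmic behavior of the two-parameter Green's function $g*g$ in $d=4$: for nearby lattice points $y_1,y_2$ the events $\{(\tilde X - X)\ni y_1\}$ and $\{(\tilde X - X)\ni y_2\}$ are so strongly positively correlated that, by inclusion--exclusion on $\pr{\exists a\in A:(\tilde X - X)\ni a}$, all intersection terms cancel with the union bound up to the desired lower-order error, leaving the same leading-order probability as for a single point.
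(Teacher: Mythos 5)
Your lower bound is correct and uses a genuinely simpler route than the paper's: translation invariance to place $0\in A$, monotonicity of capacity, and the known result for $A=\{0\}$ from~\cite{ASS19}. The paper does not invoke~\cite{ASS19}; it reproves both bounds simultaneously.

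Your upper bound, however, has a real gap. You reduce, via the hitting-probability characterization of capacity (the exchange of the $\|x\|\to\infty$ limit with the expectation can be justified by dominated convergence since $\mathcal R_n+A\subset B(0,n+\mathrm{diam}(A))$ deterministically), to showing that the probability that the two-parameter process $(\tilde X_m - X_k)_{m\ge 0,\,k\le n}$ hits $A$ is $(1+o(1))$ times the probability it hits $\{0\}$. You acknowledge this is "the main obstacle" and propose an inclusion--exclusion argument driven by strong positive correlations between the events $\{(\tilde X - X)\ni a\}$ for nearby $a$, but you never supply the estimate. The key difficulty, which you do not address, is that the additive process $\tilde X - X$ is \emph{not} Markov: after it hits some $a\in A\setminus\{0\}$, one cannot restart and apply a Harnack-type comparison to conclude it then hits $0$ with probability $1-o(1)$. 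Both walks are "mid-trajectory" at that random time, and controlling the conditional law requires care. Making the inclusion--exclusion precise would amount to bounding $\pr{\text{hit }A\setminus\{0\}\text{ but avoid }0}$, which in $\Z^4$ involves exactly Lawler's two-walk non-intersection estimates of order $(\log n)^{-1/2}$; this is plausible but requires a genuine argument you have not given.

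The paper proceeds differently and bypasses inclusion--exclusion entirely. It uses the escape-probability (last-exit) representation $\cp{}{B}=\sum_{z\in B}\mathrm{Es}_B(z)$ and a clever exact identity (Lemma~\ref{thm:lawler}): with $\cA_n^x$, $e_n^x$, $\cU_n^x$ as defined in Section~\ref{sec.d4}, one has $\sum_{x\in A}\E{\1(\cA_n^x)\, e_n^x\, \cU_n^x}=|A|$ exactly. Since $\cU_n^x$ concentrates around its mean of order $\log n$ (Lawler's Lemma 4.2.1 plus the gradient estimate for the Green's function), one extracts $\sum_{x\in A}\E{\1(\cA_n^x) e_n^x}\sim (4a_4\log n)^{-1}$ with error $O((\log n)^{-3/2})$ controlled by Lawler's $(\log n)^{-1/2}$ non-intersection bound. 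Multiplying by $n$ recovers~\eqref{d4-capacity}. The deterministic identity is what replaces the correlation bookkeeping you would need; this is the idea you are missing.
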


To prove the proposition above we use key ideas from Lawler's book~\cite{L91}: the relationship between capacity and Green's function in Theorem 3.6.1, and the estimates from Section 3.4 in~\cite{L91}. It was proved in~\cite{ASS19} that when $A=\{0\}$, then $(\log n) \cp{}{\cR_n +A}/n$ converges to $\pi^2/8$ almost surely as $n\to\infty$. The same argument as in~\cite{ASS19} can be used to prove almost sure convergence also in the case when $A$ is a general finite set. However, a central limit theorem is missing in the general case. 

\noindent \textbf{Notation.} 
We will use the notation $f\gtrsim g$ if there exists a positive constant $c$, such that $f\ge cg$, and $f\lesssim g$ (or sometimes $f=\mathcal O(g)$) if $g\gtrsim f$. We also use the standard notation $o(1)$ for a quantity which converges to $0$ as the parameter $n$ goes to infinity.
We denote by $\|\cdot \|$ the Euclidean norm, and for $x\in \mathbb Z^d$, and $r\ge 0$ let $B(x,r)=\{y\in \mathbb Z^d:\|y-x\|\le r\}$, the Euclidean ball of radius $r$. We write $\partial \Lambda$ for the inner boundary of a set $\Lambda\subseteq \mathbb Z^d$, i.e.\ the set of points in $\Lambda$ having at least one neighbor in $\Lambda^c$.

The paper is organised as follows. In Section~\ref{sec-subadditive}, we gather known results from ergodic theory that we apply to trees and sums of walks. We then provide an expression for $f_N(A)$ from Theorem~\ref{thm.fAN}, and show that it is positive
when $d>2N$. Finally, we recall why $\gamma$-capacities are sub-additive. In Section~\ref{sec-localtimes} we prove Lemma~\ref{lem.second.moment}, which is an important ingredient for the proof of Theorem~\ref{thm.KSd} given in Section~\ref{sec.KSd}. In Section~\ref{sec.charact} we prove Proposition~\ref{thm.charact} using Theorem~\ref{thm.KSd} and in Section~\ref{sec.d4} we focus on the $4$-dimensional case and give the proof of Proposition~\ref{thm.d4}.
Finally, in Section~\ref{sec-open}, we gather related open problems.

\section{Subadditive functionals \& Ergodic theorems}\label{sec-subadditive}

In this section we show the existence of three ergodic limits, namely the limits in~\eqref{2saus-3}, \eqref{intro-6} and~\eqref{intro-7} holding almost surely. We start in the next section by recalling some results about~$\gamma$-capacities. Then in Section~\ref{sec:multiparameter} we recall a multi-parameter extension of the subadditive ergodic theorem and then deduce that the limit in~\eqref{intro-6} exists almost surely. Then in Section~\ref{sec-tree} we apply it to functionals on trees.

\subsection{$\gamma$-Capacities}\label{sec-cap}
In this section we collect some results about $\gamma$-capacities. 
Note that on the other hand~\eqref{intro-7} directly follows from 
Kingman's subadditive theorem~\cite{K73} and the subadditivity 
of $\gamma$-capacities, which we recall now.

\begin{claim}
    Let $\gamma>0$. Then for any finite sets $A,B \subseteq \Z^d$ we have
        \[
        \cp{\gamma}{A\cup B}\le \cp{\gamma}{A} +\cp{\gamma}{B}.
        \]
\end{claim}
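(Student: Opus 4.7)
The plan is to exploit the variational definition of $\cp{\gamma}{\cdot}$ as an infimum of energies over probability measures, by splitting an optimal measure on $A\cup B$ into its restrictions to $A$ and to $B\setminus A$, and then applying a one-line algebraic inequality. The key point is that the kernel $K_\gamma$ takes only nonnegative values, so cross-terms in the energy can be discarded.

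First, since $A\cup B$ is finite, the simplex of probability measures on $A\cup B$ is compact and the energy $\mathcal{E}_{K_\gamma}$ is continuous, so an optimiser $\nu^\star$ achieving $\mathcal{E}_{K_\gamma}(\nu^\star) = 1/\cp{\gamma}{A\cup B}$ exists. Writing $\alpha := \nu^\star(A) \in [0,1]$, I would decompose $\nu^\star = \alpha\,\nu_A + (1-\alpha)\,\nu_B$, where $\nu_A$ is the probability measure obtained by normalising $\nu^\star|_A$ (supported on $A$) and $\nu_B$ is obtained by normalising $\nu^\star|_{(A\cup B)\setminus A}$ (supported on $B$), with the convention that a term is dropped when its mass is zero.

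Next, expand the energy and use $K_\gamma \geq 0$ to discard the cross-term:
\[
\frac{1}{\cp{\gamma}{A\cup B}} \;=\; \mathcal{E}_{K_\gamma}(\nu^\star) \;\geq\; \alpha^2\, \mathcal{E}_{K_\gamma}(\nu_A) + (1-\alpha)^2\, \mathcal{E}_{K_\gamma}(\nu_B) \;\geq\; \frac{\alpha^2}{\cp{\gamma}{A}} + \frac{(1-\alpha)^2}{\cp{\gamma}{B}},
\]
where the second inequality uses that $\nu_A$ (resp.\ $\nu_B$) is a feasible probability measure in the variational problem defining $\cp{\gamma}{A}$ (resp.\ $\cp{\gamma}{B}$).

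It then suffices to observe the elementary inequality: for any $c_1,c_2>0$ and $\alpha \in [0,1]$,
\[
\frac{\alpha^2}{c_1} + \frac{(1-\alpha)^2}{c_2} \;\geq\; \frac{1}{c_1+c_2},
\]
which is immediate from the identity $(c_1+c_2)\bigl(\alpha^2 c_2 + (1-\alpha)^2 c_1\bigr) - c_1 c_2 = (\alpha c_2 - (1-\alpha) c_1)^2 \geq 0$. Applying this with $c_1=\cp{\gamma}{A}$ and $c_2=\cp{\gamma}{B}$ and inverting yields $\cp{\gamma}{A\cup B} \le \cp{\gamma}{A}+\cp{\gamma}{B}$, as desired. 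There is no real obstacle here: the nonnegativity of $K_\gamma$ is what makes the cross-term harmless, and the rest is a short convex-analytic computation.
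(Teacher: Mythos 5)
Your proof is correct and follows essentially the same route as the paper: split a measure on $A\cup B$ according to $\alpha=\nu(A)$, pass to the normalised restrictions, drop the nonnegative cross-term, and conclude via the elementary bound $\alpha^2/x+(1-\alpha)^2/y\ge 1/(x+y)$. The only cosmetic differences are that you invoke compactness to produce a minimiser (the paper simply takes the infimum over all $\nu$) and that you handle overlapping $A,B$ directly by restricting to $(A\cup B)\setminus A\subseteq B$, whereas the paper first reduces to disjoint sets via monotonicity of $\mathrm{Cap}_\gamma$.
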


\begin{proof}[\bf Proof]
First notice that $\text{Cap}_\gamma$ is increasing for inclusion, i.e.\ if $A\subseteq B$, then  $\text{Cap}_\gamma(A) \le \text{Cap}_\gamma(B)$, since a probability measure on $A$ is also a probability measure on $B$. It thus suffices to prove sub-additivity for disjoint subsets.
Now consider $A$ and $B$ two disjoint subsets of $\mathbb Z^d$, and let $\nu$ be a probability measure on $A\cup B$. Let $\alpha = \sum_{x\in A} \nu(x)$. Then it is easy to see that
$$\sum_{x,y\in A\cup B} (1+\|x-y\|)^{-\gamma} \nu(x) \nu(y) \ge \frac{\alpha^2}{\text{Cap}_\gamma(A)} + \frac{(1-\alpha)^2}{\text{Cap}_\gamma(B)}.$$ Indeed, this is trivially true if $\alpha\in \{0,1\}$, while otherwise the restriction of $\nu/\alpha$ to $A$ is a probability measure on $A$,
and the restriction of $\tfrac{\nu}{1-\alpha}$ to $B$ is a probability measure on $B$.
Taking the infimum over all $\nu$ on the left hand side yields
$$\frac{1}{\text{Cap}_\gamma(A\cup B)} \ge \inf_{\alpha \in [0,1]} \left\{\frac{\alpha^2}{\text{Cap}_\gamma(A)} + \frac{(1-\alpha)^2}{\text{Cap}_\gamma(B)}\right\}. $$
Now observe that for any $x,y>0$, and any $\alpha\in [0,1]$,
$$\frac{\alpha^2}{x} + \frac{(1-\alpha)^2}{y} \ge \frac 1{x+y},$$
which proves well that $\text{Cap}_\gamma(A\cup B) \le \text{Cap}_\gamma(A) + \text{Cap}_\gamma(B)$ and finishes the proof.
\end{proof}

\begin{lemma}
        Let $d\geq 3$, $\gamma>2$ and $A\subseteq \Z^d$, be a finite set. Let $\cR$ be the range of a simple random walk in $\Z^d$. Then we have almost surely
        \begin{equation*}
\lim_{n\to \infty} \frac{\cp{\gamma}{\mathcal R_n + A}}{n} = \inf_{n\ge 1} \frac{\E{\cp{\gamma}{\mathcal R_n + A}}}{n} \gtrsim \cp{\gamma-2}{A}. 
\end{equation*}
\end{lemma}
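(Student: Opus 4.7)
\emph{Existence of the limit and its infimum characterisation.} I would apply Kingman's subadditive ergodic theorem to the array $Y_{m,n}:=\cp{\gamma}{\cR[m,n]+A}$ built from a two-sided simple random walk $(X_n)_{n\in\Z}$ with $X_0=0$. Subadditivity $Y_{m,k}\le Y_{m,n}+Y_{n,k}$ follows from the preceding claim applied to $\cR[m,k]+A=(\cR[m,n]+A)\cup(\cR[n,k]+A)$; stationarity of the array under integer shifts is immediate from stationarity of the increments together with translation invariance of $K_\gamma$; and the elementary bound $\cp{\gamma}{B}\le|B|$ makes $Y_{0,n}$ integrable. The Bernoulli shift on increments is ergodic, so Kingman's theorem produces almost-sure and $L^1$ convergence of $Y_{0,n}/n$ to the deterministic infimum $\inf_n\E{Y_{0,n}}/n$, which gives the first equality. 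It remains to show $\E{Y_{0,n}}/n\gtrsim\cp{\gamma-2}{A}$ for each $n$.

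\emph{A variational lower bound.} Let $\nu$ be a probability measure on $A$ achieving $\cE_{K_{\gamma-2}}(\nu)=1/\cp{\gamma-2}{A}$, and consider the random probability measure on $\cR_n+A$ defined by
\[
\mu_n(z):=\frac{1}{n}\sum_{i=1}^n\sum_{a\in A}\nu(a)\,\1(X_i+a=z).
\]
The variational definition of capacity yields $\cp{\gamma}{\cR_n+A}\ge 1/\cE_{K_\gamma}(\mu_n)$, and Jensen's inequality applied to $t\mapsto 1/t$ gives $\E{\cp{\gamma}{\cR_n+A}}\ge 1/\E{\cE_{K_\gamma}(\mu_n)}$. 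Expanding the energy, exploiting $X_i-X_j\stackrel{d}{=}X_{|i-j|}$ and bounding $\#\{(i,j):1\le i,j\le n,\ |i-j|=k\}\le 2n$,
\[
\E{\cE_{K_\gamma}(\mu_n)}\le\frac{2}{n}\sum_{a,a'\in A}\nu(a)\nu(a')\,\sum_{k\ge 0}\E{(1+\|X_k+a-a'\|)^{-\gamma}}.
\]

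\emph{Green-function convolution and conclusion.} The inner sum equals $(g*K_\gamma)(a'-a)$, with $g(y)\asymp(1+\|y\|)^{-(d-2)}$ the walk's Green function. The crucial estimate to establish is
\[
\sum_y g(y)(1+\|y+v\|)^{-\gamma}\lesssim(1+\|v\|)^{-(\gamma-2)}\qquad(v\in A-A).
\]
For $2<\gamma<d$ this is the standard Riesz composition $I_2*I_{d-\gamma}\asymp I_{d-\gamma+2}$, which one obtains by splitting the sum according to whether $y$ lies near $0$, near $-v$, or far from both. For $\gamma\ge d$ the intrinsic decay saturates at $(1+\|v\|)^{-(d-2)}$, but since $v$ runs in the bounded set $A-A$, the bound still holds after absorbing a $\mathrm{diam}(A)$-dependent factor into the implicit constant (allowed because $A$ is fixed). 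Substituting yields $\E{\cE_{K_\gamma}(\mu_n)}\lesssim\cE_{K_{\gamma-2}}(\nu)/n=1/(n\cp{\gamma-2}{A})$, whence the Jensen step gives $\E{\cp{\gamma}{\cR_n+A}}/n\gtrsim\cp{\gamma-2}{A}$; the infimum, and therefore the almost-sure limit, inherits this lower bound. The main obstacle is the convolution estimate itself; the rest reduces to routine subadditive ergodic theory once the preceding claim is in hand.
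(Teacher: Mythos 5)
Your argument is essentially the paper's: you build the same test measure on $\cR_n+A$ by pushing a fixed measure $\nu$ on $A$ along the occupation measure of the walk, apply the same Jensen step to the variational characterization of capacity, and reduce to the same convolution bound $g*K_\gamma\lesssim K_{\gamma-2}$; your direct double-sum expansion via stationarity of the increments is equivalent to the paper's second-moment bound $\E{\ell_n(x)\ell_n(y)}\le(g_n(x)+g_n(y))g(x-y)$. You are in fact slightly more careful than the paper at the last step, correctly flagging that the uniform two-sided estimate $g*K_\gamma\asymp K_{\gamma-2}$ (which the paper asserts for all $\gamma>2$) actually breaks down when $\gamma\geq d$, and that only the one-sided bound on the bounded set $A-A$ is required.
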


\begin{proof}[\bf Proof]

Let $\mu$ be a probability measure on $A$, and consider the probability measure $\nu_n$ on $\mathcal R_n +A$ given by
$$\nu_n(z) = \sum_{x\in \mathcal R_n} \sum_{a\in A} \1(x+a=z)\cdot \frac{\ell_n(x)}{n+1} \cdot \mu(a), \qquad\textrm{for all } z\in \mathcal R_n +A. $$
Note that this is indeed a probability measure on $\mathcal R_n+A$.
Then we have for any $n\geq 1$
\begin{align}\label{eq:ineqforrnplusa}
\begin{split}
        \text{Cap}_\gamma(\mathcal R_n +A) &\geq \frac{1}{\sum_{x,y\in \Z^d} (1+ \|x-y\|)^{-\gamma}\nu_n(x) \nu_n(y)} \\
        &= \frac{(n+1)^2}{\sum_{a,b\in A}\sum_{x,y\in \Z^d} (1+ \|a-b+ x-y\|)^{-\gamma}\mu(a) \mu(b)\ell_n(x)\ell_n(y)}.
        \end{split}
         \end{align}
         By Jensen's inequality we get
         \begin{align*}
                \E{\text{Cap}_\gamma(\mathcal R_n +A)} \geq \frac{(n+1)^2}{\sum_{a,b\in A}\sum_{x,y\in \Z^d} (1+ \|a-b+ x-y\|)^{-\gamma}\mu(a) \mu(b)\E{\ell_n(x)\ell_n(y)}}.
         \end{align*}
Let $g_n(x) = \mathbb E[\ell_n(x)]$, and $g(x) = g_\infty(x)$. Then we bound using the Markov property,
$$\mathbb E[\ell_n(x) \ell_n(y)] \le (g_n(x) + g_n(y))g(x-y).$$
We write $h_\gamma(u) = (1+\|u\|)^{-\gamma}$. Since $g(x-y)\asymp (1+\|x-y\|)^{2-d}$, and $\gamma>2$ by hypothesis, we get for all $a,b\in A$
$$\sum_{x,y \in \mathbb Z^d} h_\gamma(a-b+x-y)\cdot \mathbb E[\ell_n(x)\ell_n(y)] \lesssim (h_\gamma*g(a-b))\cdot\sum_{x\in \mathbb Z^d} g_n(x) = (n+1)\cdot (h_\gamma*g(a-b)).$$
Plugging this into~\eqref{eq:ineqforrnplusa} we get
\begin{align*}
        \text{Cap}_\gamma(\mathcal R_n +A) &\gtrsim \frac {n+1}{\sum_{a,b\in A}  h_\gamma*g(a-b) \mu(a)\mu(b)}
\end{align*}
and it is not difficult to see that as soon as $\gamma>2$, and $d\ge 3$, then $h_\gamma*g \asymp h_{\gamma - 2}$. Therefore taking the infimum above over all probability measures $\mu$ on $A$, we get
\begin{equation}\label{lower.gammacap}
\lim_{n\to \infty} \frac{\text{Cap}_\gamma(\mathcal R_n + A)}{n} = \inf_{n\ge 1} \frac{\mathbb E\big[\text{Cap}_\gamma(\mathcal R_n + A)\big]}{n} \gtrsim \text{Cap}_{\gamma-2}(A)
\end{equation}
and this concludes the proof.
\end{proof}

\subsection{Multiparameter subadditive ergodic theorem}\label{sec:multiparameter}

We start by recalling a multi-parameter extension of the subadditive ergodic theorem due to Akcoglu and Krengel. We denote by $\mathcal U_N$ the set of all $N$-dimensional rectangles of $\mathbb N^N$, i.e.\ sets of the form $\prod_{i=1}^N \{n_i,\dots,m_i\}$, with $0\le n_i\le m_i$ for all~$i\le N$.

\begin{theorem}[Akcoglu--Krengel~\cite{AK81}]\label{thm:akcoglu-krengel}
Let $N\ge 1$, and $(L(U))_{U\in \mathcal U_N}$ be a sequence of real-valued random variables, satisfying the following properties:
\begin{itemize}
\item[$(i)$] (Stationarity) For any $k$, any $U_1,\dots,U_k \in \mathcal U_N$, and any $u\in \mathbb N^N$, the
joint distribution of $(L(u+U_1),\dots,L(u+U_k))$ is the same as that of $(L(U_1),\dots,L(U_k))$.
\item[$(ii)$] (Subadditivity) Given any disjoint rectangles $U_1,\dots,U_k$, such that $\cup_{i=1}^k U_i \in \mathcal U_N$,
one has $L(\cup_{i\le k} U_i) \le \sum_{i \le k} L(U_i)$.
\item[$(iii)$] (Integrability) The random variables $L(U)$ are integrable for all $U\in \mathcal U_N$.
\item[$(iv)$] (Boundedness in mean) One has $\sup_{n\ge 0} \mathbb E\big[|L(\{0,\dots,n\}^N)|\big]/n^N<\infty$.
\end{itemize}
Then there exists $\gamma\in \mathbb R$, such that almost surely
$$\lim_{n\to \infty} \frac{L(\{0,\dots,n\}^N)}{n^N}  = \gamma,$$
and furthermore,
$$\gamma = \inf_{n_1,\dots,n_N\ge 1} \frac{\mathbb E\big[L\big(\prod_{i=1}^N \{0,\dots,n_i\}\big)\big]}{n_1\dots n_N} = \lim_{n_1,\dots,n_N\to \infty} \frac{\mathbb E\big[L\big(\prod_{i=1}^N \{0,\dots,n_i\}\big)\big]}{n_1\dots n_N}. $$
\end{theorem}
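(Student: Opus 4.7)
The proof proceeds in two main stages: first establishing convergence of the means $a(n_1,\ldots,n_N) := \E{L(\prod_{i=1}^N \{0,\ldots,n_i\})}$, and then promoting this to almost sure convergence. First I would combine stationarity (i) with subadditivity (ii) to show that $a$ is separately subadditive in each coordinate, i.e.\ $a(n_1+m_1,n_2,\ldots,n_N) \le a(n_1,\ldots,n_N) + a(m_1,n_2,\ldots,n_N)$, and similarly for the other coordinates. An iterated application of Fekete's one-dimensional lemma, together with integrability (iii) and boundedness (iv) to rule out $-\infty$, would yield that $a(n_1,\ldots,n_N)/(n_1\cdots n_N)$ converges to the infimum $\gamma := \inf a(n_1,\ldots,n_N)/(n_1\cdots n_N)$ as $n_1,\ldots,n_N \to \infty$.

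Next I would promote this to an almost sure upper bound via a blocking argument. Fix $\varepsilon > 0$ and choose $k$ large enough that $a(k,\ldots,k)/k^N \le \gamma + \varepsilon$. For $n = qk + r$ with $0 \le r < k$, tile $\{0,\ldots,n\}^N$ by the translates $jk + \{0,\ldots,k-1\}^N$ for $j \in \{0,\ldots,q-1\}^N$, together with a boundary layer of width $O(k)$. Subadditivity (ii) gives
\[
L(\{0,\ldots,n\}^N) \le \sum_{j \in \{0,\ldots,q-1\}^N} L\!\left(jk + \{0,\ldots,k-1\}^N\right) + (\text{boundary terms}).
\]
The stationary field $\bigl(L(jk + \{0,\ldots,k-1\}^N)\bigr)_{j \in \N^N}$ is in $L^1$, so the multiparameter Birkhoff ergodic theorem of Wiener gives that its normalised sum converges almost surely, and in particular
\[
\limsup_{n\to \infty} \frac{L(\{0,\ldots,n\}^N)}{n^N} \le \frac{a(k,\ldots,k)}{k^N} + o(1) \le \gamma + \varepsilon,
\]
once the boundary terms are controlled in $L^1$ by stationarity and bounded in $n$ thanks to (iv). Letting $\varepsilon \downarrow 0$ closes the upper bound.

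The matching lower bound is the main difficulty: subadditivity only yields one-sided estimates, so a direct tiling from below is impossible. I would handle it via the Akcoglu--Krengel decomposition $L = A + S$, where $A(U) := \sum_{x \in U} \xi(x)$ is additive for some stationary integrable field $\xi$, and $S := L - A$ is a nonnegative subadditive stationary process with $\E{S(\{0,\ldots,n\}^N)}/n^N \to 0$. The existence of such a decomposition is obtained by an $L^1$-compactness argument on the family $\{L(U)/|U|\}_{U \in \mathcal U_N}$, extracting the "additive extremal" process dominated by $L$. The additive part then converges almost surely by the multiparameter Birkhoff--Wiener theorem, while the nonnegative zero-mean subadditive part is shown to converge to zero almost surely by a Hopf-style maximal ergodic inequality adapted to rectangles in $\N^N$.

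The hard part is precisely the almost sure vanishing of the zero-mean nonnegative subadditive remainder $S$: the multiparameter maximal ergodic inequality is substantially more subtle than its one-dimensional counterpart, since rectangles in $\N^N$ do not form a totally ordered chain, so Vitali-type covering arguments must replace the simple "stopping time" reasoning available in Kingman's $N=1$ proof. This is the step where the combinatorial and measurable geometry of $\mathcal U_N$ genuinely enters.
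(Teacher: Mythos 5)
The paper states this as a result of Akcoglu and Krengel and gives no proof, so there is no in-paper argument to compare against; I assess your plan on its own. Your route is the Kingman-style one lifted to $N$ parameters: Fekete for the means, blocking plus the Wiener multiparameter ergodic theorem for the a.s.\ upper bound, and a decomposition $L = A + S$ for the lower bound. This is a legitimate strategy (closer to Smythe's treatment than to Akcoglu--Krengel's original, which proves a maximal inequality for nonnegative superadditive processes directly via a filling scheme and then uses a Banach-principle argument), and the pieces do fit together. But you have misidentified where the difficulty lies. The ``multiparameter Hopf maximal inequality'' you flag as the hard remaining step is not an extra ingredient at all: your own blocking estimate from the second stage applies verbatim to \emph{any} stationary subadditive process, in particular to $S$; since $\gamma(S)=0$ it yields $\limsup_n S(\{0,\dots,n\}^N)/n^N \le 0$ a.s., and $S\ge 0$ then forces $S(\{0,\dots,n\}^N)/n^N \to 0$ a.s.\ with no Vitali covering argument whatsoever. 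You are proposing to re-derive a tool strictly stronger than what you already have in hand.

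The genuine gap is the existence of the decomposition $L=A+S$ itself, which you assert but do not establish. The phrase ``$L^1$-compactness argument on $\{L(U)/|U|\}$'' does not produce a stationary additive minorant: bounded sets in $L^1$ are not weakly sequentially compact without uniform integrability, and even granting a weak limit one must show it is of the form $\sum_{x\in U}\xi(x)$ for a single stationary field $\xi$ with $\sum_{x\in U}\xi(x)\le L(U)$ for all $U$ simultaneously and $\E{\xi(0)}=\gamma$. This is the content of Kingman's decomposition theorem (for $N=1$) and Smythe's extension (for $N\ge 2$), and it is a nontrivial result in its own right; either cite it honestly as a black box, or replace the whole third stage by a Steele-type greedy covering argument that proves $\liminf_n L(\{0,\dots,n\}^N)/n^N \ge \gamma$ a.s.\ directly, which avoids constructing $A$ at all. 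Two smaller points: (a) in the blocking step, controlling the boundary frame in $L^1$ only gives convergence in probability; to get a.s.\ convergence you should subdivide the frame into singletons and apply the multiparameter Birkhoff theorem to the stationary field $x\mapsto L(\{x\})$, comparing the full-cube and interior-cube averages; and (b) the statement asserts a deterministic limit $\gamma$, which tacitly assumes ergodicity of the underlying shift --- without it, your Birkhoff--Wiener step correctly produces a limit measurable with respect to the invariant $\sigma$-field, not a constant, so the plan should say where ergodicity is used.
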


Using this we now explain the existence of the limit in~\eqref{intro-6}.

First, note that the volume of a sausage is a Minkowski sum $|\cR+A|$, 
where $\cR$ is the (random) support of our process, and $A$
is a finite subset of $\mathbb Z^d$. The elementary exclusion-inclusion formula gives that
\[
|\cR+(A\cup B)|=|(\cR+A)\cup (\cR+B)|= |\cR+A|+|\cR+B|-|(\cR+A)\cap(\cR+B)|.
\]
Since, $\cR+(A\cap B)\subset (\cR+A)\cap(\cR+B) $, we have a strong form of subadditivity
\begin{equation}\label{sub-1}
|\cR+(A\cup B)|+|\cR+(A\cap B)|\le  |\cR+A|+|\cR+B|.
\end{equation}
Now clearly, for any fixed $A\subset \mathbb Z^d$ and $\cR^1,\ldots, \cR^N$ independent simple random walk ranges, the process defined by
$$L\Big(\prod_{i=1}^N \{n_i,\dots,m_i \}\Big) = \Big|\mathcal R^1[n_1,m_1]+\dots + \mathcal R^N[n_N,m_N]+A\Big|,$$
satisfies all the hypotheses of the previous theorem, and hence we get the almost sure existence of the limit below for any finite set $A$,
$$f_N(A) := \lim_{n\to \infty} \frac{\Big|\mathcal R^1_n+\dots + \mathcal R^N_n+A\Big|}{n^N} = \lim_{n_1,\dots,n_N\to \infty} \frac{\mathbb E\left[\Big|\mathcal R^1_{n_1}+\dots + \mathcal R^N_{n_N}+A\Big|\right]}{n_1\dots n_N}. $$
Note also that it follows immediately from this definition that $f_N$ satisfies for all $N$ the strong subadditivity property which reads
\begin{equation*}
f_N(A\cup B)+f_N(A\cap B)\le f_N(A)+f_N(B).
\end{equation*}
Therefore almost surely as well,
$$\lim_{n\to \infty} \frac{f_{N-1}(\mathcal R_n +A)}{n} = \lim_{n\to \infty} \frac{\mathbb E\big[f_{N-1}(\mathcal R_n +A)\big]}{n}. $$
Applying twice the dominated convergence theorem yields
\begin{equation}\label{rec-fN}
\begin{split}
f_N(A) & = \lim_{n\to \infty} \frac 1n\cdot \lim_{m\to \infty} \frac{\mathbb E\left[\Big|\mathcal R^1_m+\dots + \mathcal R^{N-1}_m + \mathcal R^N_n+A\Big|\right]}{m^{N-1}}\\
& =  \lim_{n\ge 1} \frac{\mathbb E[f_{N-1}(\mathcal R_n^N +A)]}{n} = \lim_{n\to \infty} \frac{f_{N-1}(\mathcal R_n^N +A)}{n} .
\end{split}
\end{equation}

In the next result we give an expression for the set-function $f_N(A)$ generalising the expression for capacity in the case when $N=1$. We give the proof in Section~\ref{sec-dual}.

\begin{lemma}\label{lem-dual}
Let $N$ be an integer, and consider dimension $d>2N$. Let $\cR^1,\ldots, \cR^N$ be independent ranges of double-sided simple random walks in $\Z^d$. Then,
\begin{equation}\label{char-limit2}
f_N(A)=\sum_{a\in A} \pr{\bigcap_{i=1}^{N}\big\{ ({\cR}^i(0,\infty)+\sum_{i<j\leq N}{\cR}^j(-\infty, \infty)+ a)\cap A =\emptyset\big\}}.
\end{equation}
\end{lemma}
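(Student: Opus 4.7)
The approach is to give each site of the sausage $\mathcal R^1_n+\cdots+\mathcal R^N_n+A$ a canonical ``last-time'' representation, take expectations, and pass to the limit using stationarity and monotone convergence. This generalizes the familiar dual formula for capacity (the $N=1$ case, corresponding to picking the last visit of a simple random walk to a finite set).

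First, I would assign to each $z\in\mathcal R^1_n+\cdots+\mathcal R^N_n+A$ a canonical decomposition $(t_1,\dots,t_N,a)\in\{0,\dots,n\}^N\times A$ defined recursively by
\[
t_i \;:=\; \max\Bigl\{s\le n:\; z-\sum_{k<i}X^k_{t_k}-X^i_s\in \mathcal R^{i+1}_n+\cdots+\mathcal R^N_n+A\Bigr\},\quad i=1,\dots,N,
\]
(the right-hand side being simply $A$ when $i=N$), and $a:=z-\sum_{j}X^j_{t_j}\in A$. A short check shows this decomposition exists and is unique, which yields the combinatorial identity
\[
|\mathcal R^1_n+\cdots+\mathcal R^N_n+A|=\sum_{t_1,\dots,t_N=0}^{n}\sum_{a\in A}\mathbf{1}\bigl(G^n_{t_1,\dots,t_N,a}\bigr),
\]
where $G^n_{t_1,\dots,t_N,a}$ is the event that $(t_1,\dots,t_N,a)$ is the canonical decomposition of $\sum_j X^j_{t_j}+a$.

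Next, I would shift each walk so that it starts at the origin via $Y^j_u:=X^j_{t_j+u}-X^j_{t_j}$, so that $(Y^1,\dots,Y^N)$ is a family of independent two-sided SRWs from $0$. The maximality condition at stage $i$ then translates into
\[
F^n_i \;=\; \Bigl\{\bigl(Y^i_{(0,n-t_i]}+\sum_{j>i}Y^j_{[-t_j,n-t_j]}\bigr)\cap(a-A)=\emptyset\Bigr\},
\]
and $G^n_{t_1,\dots,t_N,a}=\bigcap_{i=1}^N F^n_i$. By stationarity of the increments, $\pr{G^n_{t_1,\dots,t_N,a}}$ depends only on $a$ and on the window sizes $(t_j,n-t_j)_{j=1}^N$. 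Each $F^n_i$ is decreasing as $n-t_i$ or $t_j,n-t_j$ (for $j>i$) grow, so $\bigcap_i F^n_i$ is decreasing as well; as $\min(t_j,n-t_j)\to\infty$ for every $j$, continuity from above gives convergence to $\bigcap_i F^\infty_i$, and the joint sign-flip symmetry $(Y^1,\dots,Y^N)\stackrel{d}{=}(-Y^1,\dots,-Y^N)$ identifies this with $\bigcap_i E_i$ as in the lemma.

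Finally, since the tuples $(t_i)\in\{0,\dots,n\}^N$ with $\min(t_j,n-t_j)<K$ for some $j$ occupy only an $O(K/n)$ fraction, a bounded-convergence argument gives
\[
\frac{1}{n^N}\sum_{t_1,\dots,t_N=0}^{n}\pr{G^n_{t_1,\dots,t_N,a}}\;\xrightarrow[n\to\infty]{}\;\pr{\bigcap_{i=1}^N E_i}.
\]
Summing over $a\in A$ and using $f_N(A)=\lim_{n\to\infty}\E{|\mathcal R^1_n+\cdots+\mathcal R^N_n+A|}/n^N$ from~\eqref{rec-fN} then yields~\eqref{char-limit2}.

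The main obstacle is the combinatorial setup: one must carefully verify that the greedy canonical decomposition is well-defined and unique, so that the double-counting of the sausage is correct. A related subtlety is the sign-flip step: after shifting, the forward ranges $Y^i_{(0,\cdot]}$ and two-sided ranges $Y^j_{[-\cdot,\cdot]}$ do not directly match the statement of the lemma, and one must invoke the joint distributional symmetry of the walks to identify the limiting probability with $\pr{\bigcap_i E_i}$.
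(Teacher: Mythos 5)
Your proposal is correct and follows essentially the same route as the paper: the greedy last-visit decomposition of a sausage point $z$ into $(t_1,\dots,t_N,a)$, recentering each walk at $X^i_{t_i}$ so that the indicator becomes a stationary event of the increments in the windows $[-t_j,n-t_j]$, and then identifying $f_N(A)$ with $\lim_n n^{-N}\E{|\cR_n^1+\cdots+\cR_n^N+A|}$ via the Akcoglu--Krengel theorem. The only cosmetic differences are that you spell out the sign-flip symmetry and the bounded-convergence argument discarding the $O(K/n)$ fraction of near-boundary index tuples, both of which the paper leaves implicit by restricting to $i_j\in(\log n,n-\log n)$.
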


\subsection{Functionals on trees}\label{sec-tree}
A fundamental property of the infinite invariant tree is its invariance in law after applying the shift on the labels. This 
fact was first observed by Le Gall and Lin~\cite{LGL16} 
on the restriction of the tree to $\mathcal T_+$ (including the root), 
and then on the full tree independently by Zhu~\cite{Zhu18} and Bai and Wan~\cite{BW20}. The infinite tree
has an invariant product measure, and the shift is actually a reversible map.
Le Gall and Lin introduced the infinite tree to be able to use ergodic theory 
to prove asymptotics for the size of the 
range of the first $n$ labelled sites of the invariant branching random walk, $\cT^0_n$.
They then transferred their result to critical trees conditioned 
on having total population $n$, as $n$ goes to infinity. 
Here, for simplicity, we only discuss $\cT^0_n$, 
and the reader is referred to ~\cite{LGL16} to transfer the
results to critical branching random walks conditioned to have population $n$.

In fact by following the same argument as in ~\cite{LGL16}
we obtain an ergodic limit for the sausage obtained by rolling a finite set $A$ over $\cT^0_n$
(i.e. the Minkowski sum of $A$ and $\cT^0_n$).
More precisely, for any finite set $A\subset \mathbb Z^d$, we obtain
$$
\frac{|\cT^0_n+A|}{n}\  \xrightarrow[n\to \infty]{(\mathbb P)} \ 
\sum_{x\in A} \mathbb P(\mathcal T_{+}^x\cap A= \emptyset).
$$ 
This latter expression turns out to be the branching capacity of $A$. 
Indeed, Zhu~\cite[Proposition 8.1]{Zhu16} showed that  
\begin{equation}\label{zhu-reversible}
\textrm{BCap}(A) = \sum_{x\in A} \mathbb P(\mathcal T_-^x\cap A= \emptyset)=
\sum_{x\in A} \mathbb P(\mathcal T_{+}^x \cap A = \emptyset).
\end{equation}
Thus, the original part in Theorem~\ref{theo-2saus} is to make the link 
with the set-function $f_2(A)$ obtained with two independent random walks from~\eqref{intro-6}. 

Now let us mention some natural extensions of our results. 
We can indeed deduce that also $\textrm{cap}(\cT^0_n+A)/n$ converges in probability,
and furthermore that the limit is of order $\text{Cap}_{d-6}(A)$. 
Let us just explain the proof in this case. First, applying twice the multi-parameter ergodic theorem, Theorem~\ref{thm:akcoglu-krengel}, 
and using Theorems~\ref{theo-2saus} and~\ref{thm.fAN}, we get that almost surely
\begin{align}\label{cap.Rn*}
& \lim_{n\to \infty} \frac{\textrm{cap}(\mathcal T_n^0+A)}{n} = \lim_{n\to \infty} \frac{|\mathcal R_n + \mathcal T_n^0+A|}{n^2} =  \lim_{n\to \infty} \frac{\textrm{BCap}(\mathcal R_n +A)}{n}\\
\nonumber & \asymp  \lim_{n\to \infty}\frac{f_2(\mathcal R_n +A)}{n} =  \lim_{n\to \infty} \frac{|\mathcal R_n^1+ \mathcal R_n^2 +\mathcal R_n^3 +A|}{n^3}=f_3(A) \asymp \text{Cap}_{d-6}(A),
\end{align}
where $\mathcal R_n$ and $(\mathcal R_n^i)_{i=1,2,3}$ are independent ranges of simple random walks, independent of $\mathcal T^0$.

\subsection{Dual representation for $f_N$}\label{sec-dual}

In this section we give the proof of Lemma~\ref{lem-dual} which again makes use of Theorem~\ref{thm:akcoglu-krengel}.

\begin{proof}[\bf Proof of Lemma~\ref{lem-dual}]

We can write 
\begin{align*}
	|\cR^1_n + \ldots + \cR^N_n +A | = \sum_{i_1\leq n}\sum_{x_1\in \cR^2_n + \ldots + \cR^N_n +A} \1(X_{i_1}^1 + x_1\notin \cup_{j_1>i_1} (X_{j_1}^1 + \cR^2_n + \ldots + \cR^N_n +A)) \\ = \sum_{i_1,i_2\leq n}\sum_{x_2\in \cR^3_n + \ldots + \cR^N_n +A} \1(X_{i_1}^1 + X_{i_2}^2 + x_2\notin \cup_{j_1>i_1} (X_{j_1}^1 + \cR^2_n + \ldots + \cR^N_n +A)) \\ \times \1(X_{i_2}^2+x_2\notin \cup_{j_2>i_2} (X_{j_2}^2 + \cR^3_n + \ldots + \cR^N_n +A)).
\end{align*}
Iterating this, we  obtain
	\begin{align*}
		|\cR^1_n + \ldots + \cR^N_n &+A |\\ = \sum_{a\in A} \sum_{i_1,\ldots, i_N\leq n} &\1(X_{i_N}^N +a \notin \cup_{j_N>i_N} (X_{j_N}^N + A)) \times \1(X_{i_{N-1}}^{N-1} + X_{i_{N}}^N + a \notin \cup_{j_{N-1}>i_{N-1}} (X_{j_{N-1}}^{N-1} + \cR^N_n +A)) \\ &\times \cdots
		\times \1(X_{i_{1}}^{1} +\ldots + X_{i_{N}}^N + a \notin \cup_{j_{1}>i_{1}} (X_{j_{1}}^{1} + \cR_n^{2} + \ldots +\cR^N_n +A)). 
		\end{align*}
		For all $i\in \{1,\ldots, N\}$ and $k\leq n$ we set 
		\[
		\til{\cR}^i(-k,n-k) = \bigcup_{k\leq j\leq n}\{ X^i_{k}- X^i_j\} \cup \bigcup_{0\leq j<k} \{X^i_k-X^i_j\} \ \text{ and }\ \til{\cR}^i(0,n-k) = \bigcup_{k\leq j\leq n} \{X_k^i - X_j^i\}.
		\]
		Then we can rewrite the expression above as 
		\begin{align*}
		|\cR^1_n + \ldots + \cR^N_n &+A |\\ 
		= \sum_{a\in A} \sum_{i_1,\ldots, i_N
		\leq n} &\1((\til{\cR}^N(0,n-i_N) + a) \cap A =\emptyset)\1((\til{\cR}^{N-1}(0,n-i_{N-1}) + \til{\cR}^N(-i_N,n-i_N) +a )\cap A=\emptyset ) \\&\times \cdots \times  \1((\til{\cR}^1(0,n-i_1) + \til{\cR}^2(-i_2, n-i_2) + \cdots \til{\cR}^N(-i_N,n-i_N)+ a) \cap A =\emptyset).
		\end{align*}
		Restricting the sum above over all $i_1, \ldots, i_N \in (\log n, n-\log n)$, dividing through by $n^N$ and applying Theorem~\ref{thm:akcoglu-krengel} we deduce that almost surely as $n\to\infty$
	\begin{align*}
		\frac{1}{n^N}\cdot |\cR^1_n + \ldots + \cR^N_n +A | \to \sum_{a\in A} \pr{\bigcap_{i=1}^{N}\big\{ ({\cR}^i(0,\infty)+\sum_{i<j\leq N}{\cR}^j(-\infty, \infty)+ a)\cap A =\emptyset\big\}},
	\end{align*}
	where ${\cR}(-\infty,\infty)$ corresponds to the range of a double-sided simple random walk. This now concludes the proof.
\end{proof}



\section{Preliminaries on local times}\label{sec-localtimes}
Our goal in this section is to prove Lemma~\ref{lem.second.moment}.
\subsection{Preliminaries}\label{subsec.prelim}

Fix an integer $N\ge 1$, and assume that $d\ge 1+2N$. 
Recall that  $X^1,\ldots, X^N$, are independent simple random walks on 
$\mathbb Z^d$, starting from the origin, and recall also \reff{def-localN}.   
Recall furthermore that $g(x)=G_1(x)=\mathbb E[\sum_{n=0}^\infty \1(X_n=x)]$, and for any $k\ge 2$,
 \[
 G_k(x-z) =\E{\ell_{z+X^1+\dots+X^N}(x)}= (G_{k-1}\ast g)(x-z),
 \]
 where $\ast$ stands for the convolution operator. Recall that $g(x) \asymp \tfrac{1}{(1+\|x\|)^{d-2}}$, which by an immediate induction shows that for any $k \in\{1,\dots,N\}$ (and as long as $d\ge 1+2N$), one has 
 \begin{equation}\label{asymp.Gk}
 G_k(x) \asymp \frac 1{(1+\|x\|)^{d-2k}}. 
 \end{equation}
 
\subsection{Proof of Lemma~\ref{lem.second.moment}}

Let $k,\ell,m\in \{1,\dots,N\}$, and for $a,b, z\in \Z^d$, let 
 	\[
 	F_{k,\ell,m}(z,a,b) = \sum_w G_{k}(z-b+w) G_{\ell}(w) G_m(a-b+w).
 	\]
 The first step towards the proof of Lemma~\ref{lem.second.moment} is the following claim. 
 \begin{claim}\label{cl:smallterms}
\emph{One has 
 	\[
 	\sum_{y,y'} (g(y) g(y-y') + g(y')g(y-y')) F_{k,\ell,m}(a+y,b+y') = F_{k+1,\ell+1,m}(z,a,b)+ F_{k+1,\ell,m+1}(z,a,b).
 	\]}
 \end{claim}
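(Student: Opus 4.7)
The plan is a direct algebraic computation resting on two facts: the symmetry $g(-x)=g(x)$ of the simple random walk Green's function, and the convolution identity $g \ast G_j = G_{j+1}$. I read the statement as having $F_{k,\ell,m}(z, a+y, b+y')$ on the left-hand side; the displayed three-argument $F$ on the right with only two-argument invocations on the left appears to be a typo.

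The first preparatory move is to translate the inner summation variable in the definition of $F_{k,\ell,m}(z, a+y, b+y')$ by $y'$. This rewrites
\[
F_{k,\ell,m}(z, a+y, b+y') \;=\; \sum_{w} G_{k}(z-b+w)\, G_{\ell}(w+y')\, G_{m}(a-b+y+w),
\]
so that $y$ appears only inside the $G_m$-factor and $y'$ only inside the $G_\ell$-factor; the $G_k$-factor is free of both. For the first summand $\sum_{y,y'} g(y)\, g(y-y')\, F_{k,\ell,m}(\cdot)$, the plan is to carry out the $y'$-sum first: by symmetry of $g$, the sum $\sum_{y'} g(y-y')\, G_\ell(w+y')$ is a convolution and evaluates to $G_{\ell+1}(w+y)$. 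After the change of variables $v = w+y$, what remains is $\sum_{w} g(v-w)\, G_k(z-b+w) = G_{k+1}(z-b+v)$, and collecting the three factors gives exactly $F_{k+1,\ell+1,m}(z,a,b)$.

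The second summand $\sum_{y,y'} g(y')\, g(y-y')\, F_{k,\ell,m}(\cdot)$ is handled in mirror-symmetric fashion: now the $y$-sum is done first, so that $\sum_{y} g(y-y')\, G_m(a-b+y+w)$ yields $G_{m+1}(a-b+y'+w)$; then the $y'$-sum, after a shift $v = w+y'$, converts $g(v-w)\, G_k(z-b+w)$ into $G_{k+1}(z-b+v)$, producing $F_{k+1,\ell,m+1}(z,a,b)$. Adding the two gives the claim. There is no substantive obstacle here beyond careful bookkeeping: the single point worth attention is that the two cross factors $g(y)$ versus $g(y')$ dictate which of $G_\ell, G_m$ gets "upgraded" via the $g(y-y')$ convolution, and in which order one must sum. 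Since everything is non-negative, Fubini justifies the rearrangements without further comment.
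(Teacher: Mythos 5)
Your proof is correct and takes essentially the same route as the paper: a direct computation using the symmetry of $g$ and the convolution identity $g\ast G_j = G_{j+1}$, applied iteratively to the inner sums (and you correctly identified the typo in the statement, with $F_{k,\ell,m}(z,a+y,b+y')$ as the intended left-hand side). The only cosmetic difference is that the paper first observes the symmetry $F_{k,\ell,m}(z,a,b)=F_{k,m,\ell}(z,b,a)$, which reduces the claim to verifying a single summand, whereas you handle both summands directly by mirror-symmetric calculations; the underlying manipulations are the same.
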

 
 \begin{proof}[\bf Proof]
 First of all we notice that $F_{k,\ell,m}(z,a,b) = F_{k,m,\ell}(z,b,a)$. Therefore it suffices to prove that 
 \[
 \sum_{y,y'} g(y) g(y-y') F_{k,\ell,m}(z,a+y,b+y') = F_{k+1,\ell+1,m}(z,a,b).
 \]
  We have 
 \begin{align*}
 	&\sum_{y,y'} g(y) g(y-y') F_{k,\ell,m}(z,a+y,b+y') \\
 	&= \sum_{y,y',w} g(y) g(y-y') G_{k}(z-b-y'+w) G_{\ell}(w) G_m(a+y-b-y'+w)\\
 	&=\sum_{y,u,w} g(y) g(u) G_k(z-b+u-y+w) G_\ell(w)G_m(a-b+u+w) \\
 	&=\sum_{u,w} g(u) G_\ell(w)G_m(a-b+u+w) G_{k+1}(z-b+u+w) \\
 	&= \sum_{u,v} g(u) G_\ell(v-u) G_m(a-b+v) G_{k+1}(z-b+v)\\
 	&= \sum_{v}G_{\ell+1}(v)G_m(a-b+v) G_{k+1}(z-b+v)=F_{k+1,\ell+1,m}(z,a,b)
 \end{align*}
 and this completes the proof.
 \end{proof}
For $z,a,b\in \mathbb Z^d$, define now
$$
V_N(z,a,b) = \E{\ell_{z+X^1+\dots+X^N}(a)\ell_{z+X^1+\dots+X^N}(b)} .
$$ 
The next step is obtained by a simple induction on $N\ge 1$. 
\begin{lemma}\label{lem.second}
 	\emph{One has for any $z,a,b\in \Z^d$, 
 	\begin{align*}
 		V_N(z,a,b) &\leq \  G_{N}(z-b)G_{N}(a-b)+ \sum_{k=1}^{N-1}{N-1 \choose k-1} F_{N,N-k,k}(z,a,b) \\&\ \ +\ G_{N}(z-a)G_{N}(a-b)+ \sum_{k=1}^{N-1}{N -1\choose k-1} F_{N,N-k,k}(z,b,a),
 	\end{align*}
	with the convention that the two sums are zero when $N= 1$.}
 	 \end{lemma}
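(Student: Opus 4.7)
The proof proceeds by induction on $N\ge 1$. For the base case $N=1$, the standard second-moment estimate for a single simple random walk---obtained by splitting $\mathbb{E}[\ell(a)\ell(b)]$ according to whether $a$ or $b$ is visited first and applying the Markov property---yields $V_1(z,a,b) \le g(z-a)g(a-b) + g(z-b)g(a-b)$, which matches the claimed bound with the convention that the two sums are empty.

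For the inductive step, I would first derive a recursion by conditioning on the last walk $X^N$. Using the identity $\ell_{z+X^1+\dots+X^N}(a) = \sum_t \ell_{z+X^1+\dots+X^{N-1}}(a-X^N_t)$, which follows directly from~\eqref{def-localN}, together with independence of the walks, gives
\[
V_N(z,a,b) = \sum_{u,w} V_1(0,u,w)\, V_{N-1}(z,a-u,b-w).
\]
Plugging in the base-case bound for $V_1$ then produces
\[
V_N(z,a,b) \le \sum_{y,y'} \big(g(y)g(y-y')+g(y')g(y-y')\big)\, V_{N-1}(z,a-y,b-y').
\]

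Next I would substitute the induction hypothesis for $V_{N-1}$ on the right-hand side, producing four blocks to evaluate. For the principal $G_{N-1}G_{N-1}$ terms, a direct change of variables---namely $(u,v)=(y,y'-y)$ in one piece and $(u,v)=(y',y-y')$ in the other---combined with the convolution identity $g*G_{N-1}=G_N$ and symmetry of $g$, yields
\[
\sum_{y,y'} \big(g(y)g(y-y')+g(y')g(y-y')\big) G_{N-1}(z-b+y')G_{N-1}(a-b-y+y') = G_N(z-b)G_N(a-b) + F_{N,1,N-1}(z,a,b),
\]
and analogously $G_N(z-a)G_N(a-b)+F_{N,1,N-1}(z,b,a)$ for the other principal block. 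For the $F$-terms in the induction hypothesis, since the weight $g(y)g(y-y')+g(y')g(y-y')$ is invariant under $(y,y')\mapsto(-y,-y')$ by symmetry of $g$, Claim~\ref{cl:smallterms} applies directly and gives
\[
\sum_{y,y'} \big(g(y)g(y-y')+g(y')g(y-y')\big) F_{N-1,N-1-k,k}(z,a-y,b-y') = F_{N,N-k,k}(z,a,b)+F_{N,N-1-k,k+1}(z,a,b),
\]
together with its symmetric $(z,b,a)$ counterpart.

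Finally I would collect like terms. The coefficient of $F_{N,N-k,k}(z,a,b)$ in the resulting upper bound equals $\binom{N-2}{k-1}+\binom{N-2}{k-2}$ (with out-of-range binomials interpreted as zero), which by Pascal's identity is $\binom{N-1}{k-1}$; at the boundary $k=N-1$, the otherwise missing unit is supplied by the extra $F_{N,1,N-1}$ coming from the principal computation, and likewise for the $(z,b,a)$ block. The main obstacle in the argument is precisely this combinatorial bookkeeping---matching the two sources of $F$-terms across the change of variables and identifying Pascal's identity as the mechanism that closes the recursion.
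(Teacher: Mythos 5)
Your proof is correct and follows essentially the same route as the paper: induction on $N$, a recursion obtained by conditioning on one walk (the paper conditions on the $(N{+}1)$-th walk going from $N$ to $N{+}1$, you condition on $X^N$ going from $N{-}1$ to $N$, which after the sign flip $(y,y')\mapsto(-y,-y')$ is the identical operator $A$), then applying Claim~\ref{cl:smallterms} and Pascal's identity to track coefficients — with the principal $G_{N-1}G_{N-1}$ blocks contributing the new $G_NG_N$ term plus the boundary $F_{N,1,N-1}$ term, exactly as in the paper's verification that $AW_N=W_{N+1}$.
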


 \begin{proof}[\bf Proof]
Defining, 
 \[
 W_N(z,a,b) = G_{N}(z-b)G_{N}(a-b)+ \sum_{k=1}^{N-1}{N-1 \choose k-1} F_{N,N-k,k}(z,a,b), 
 \]
 the statement of the lemma then becomes
 \begin{align}\label{eq:generalineq}
 	V_{N}(z,a,b)\leq \ W_N(z,a,b) + W_N(z,b,a).
 \end{align}
 We will prove this by induction on $N$. For $N=1$ 
 we get 
 \begin{align*}
 	V_N(z,a,b) & \le \mathbb E\Big[\sum_{s,t\in \mathbb N }\1(z+X_t=a)\cdot \1(z+X_{s+t}=b)\Big]+ \mathbb E\Big[\sum_{s,t\in \mathbb N}\1(z+X_{s+t}=a)\cdot \1(z+X_t=b)\Big] \\
	& = g(z-a)g(a-b)+g(z-b)g(a-b),
 \end{align*}
and hence~\eqref{eq:generalineq} holds for $N=1$. Suppose now that~\eqref{eq:generalineq} holds for $N$. We will establish it also for $N+1$. Summing over all the possible locations of the $(N+1)$-st walk and using the induction hypothesis gives
\begin{align*}
	V_{N+1}(z,a,b) \ &\leq \ \sum_{y,y'} (g(y)g(y-y')+g(y') g(y-y')) V_{N}(z,a+y,b+y')\\ 
	&\leq \ \sum_{y,y'} (g(y)g(y-y')+g(y') g(y-y'))\cdot \big(W_{N}(z,a+y,b+y')+W_{N}(z,b+y',a+y)\big).
\end{align*}
To simplify notation we let $A$ be the operator given by 
\[
Af(z,a,b) = \sum_{y,y'} (g(y)g(y-y') + g(y')g(y-y')) f(z,a+y,b+y')
\]
for any function $f$. 
To prove the lemma it thus suffices to show that 
\begin{align}\label{eq:maingoalwithF}
AW_N(z,a,b) = W_{N+1}(z,a,b).
 \end{align}
Note first that 
$$\sum_{y,y'} g(y')g(y-y') G_N(z-b-y')G_N(a-b+y-y') = G_{N+1}(z-b) G_{N+1}(a-b),$$
which gives the first term in $W_{N+1}(z,a,b)$. Note also that 
$$\sum_{y,y'} g(y)g(y-y') G_N(z-b-y')G_N(a-b+y-y') = F_{N+1,1,N}(z,a,b).$$
By Claim~\ref{cl:smallterms} we obtain
\[
A F_{N,1,N-1}(z,a,b) = F_{N+1,1,N}(z,a,b) + F_{N+1,2,N-1}(z,a,b),
\]
which shows that the coefficient of the term $F_{N+1,1,N}(z,a,b)$ in $AW_{N}(z,a,b)$ is given by $1+ {N-1 \choose N-2}$ which is equal to ${N\choose N-1}$. Thus the term $F_{N+1,1,N}(z,a,b)$ appears with the same coefficient in both $W_{N+1}(z,a,b)$ and $AW_{N}(z,a,b)$. 
Let $k\in \{2,\ldots, N-1\}$. Using Claim~\ref{cl:smallterms} again we get that 
\begin{align*}
	AF_{N,N-k,k}(z,a,b) &= F_{N+1,N+1-k,k}(z,a,b) + F_{N+1,N-k,k+1}(z,a,b) \quad \text{ and } \\
	AF_{N,N+1-k,k-1}(z,a,b) &= F_{N+1,N+1-k,k}(z,a,b) + F_{N+1,N+2-k,k-1}(z,a,b).
\end{align*}
We thus see that for $k\in \{2,\ldots, N-1\}$ the coefficient of the term $F_{N+1,N+1-k,k}(z,a,b)$ in $AW_{N}(z,a,b)$ is equal to 
\[
{N-1 \choose k-1} + {N-1\choose k-2} = {N\choose k-1},
\] 
which is the same as its coefficient in $W_{N+1}(z,a,b)$.
Using Claim~\ref{cl:smallterms} for a last time we see that the term $F_{N+1,N,1}(z,a,b)$ is one of the two terms of $AF_{N,N-1,1}(z,a,b)$, and hence its coefficient in $AW_{N}(z,a,b)$ must be ${N\choose 0}={N-1\choose 0} = 1$. Therefore, we see that the coefficients of all the terms appearing in $AW_N(z,a,b)$ and $W_{N+1}(z,a,b)$ are equal and this completes the proof of~\eqref{eq:maingoalwithF}.
\end{proof}

Finally we shall need the following claim. 

\begin{claim}\label{claim.second}\emph{
	Let $N\ge 1$ and $d>2N$. There exists $C>0$, such that for all $k\in \{1,\ldots,N-1\}$, and all $z,a,b\in \Z^d$, with $\|z\|\geq 2\max(\|a\|,\|b\|)$, 
	\[
	F_{N,N-k,k}(z,a,b) \le C\cdot G_{N}(z) G_{N}(a-b).
	\]}
\end{claim}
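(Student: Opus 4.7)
\textbf{Proof plan for Claim~\ref{claim.second}.} The plan is to split the sum
\[
F_{N,N-k,k}(z,a,b)=\sum_{w} G_{N}(z-b+w)\,G_{N-k}(w)\,G_{k}(a-b+w)
\]
into two regions according to the location of $w$ relative to the pole $b-z$ of the first factor, and to exploit the separation $\|z\|\ge 2\max(\|a\|,\|b\|)$ to decouple the factors in each region. We may assume $\|z\|$ exceeds a large absolute constant, since otherwise $G_N(z)\asymp G_N(a-b)\asymp 1$ and the bound is trivial.

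\emph{Region A: $\|z-b+w\|\ge \|z\|/4$.} Here \eqref{asymp.Gk} gives
$G_N(z-b+w)\lesssim (1+\|z\|)^{-(d-2N)}\asymp G_N(z)$, so this term pulls out of the sum and we are left to bound
\[
\sum_{w} G_{N-k}(w)\,G_{k}(a-b+w).
\]
Using the symmetry $G_k(-u)=G_k(u)$ and the convolution identity $G_{N-k}\ast G_k=G_N$, the change of variables $w\mapsto -w$ turns this sum into $G_N(a-b)$. Thus Region A contributes at most a constant times $G_N(z)\,G_N(a-b)$.

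\emph{Region B: $\|z-b+w\|<\|z\|/4$.} From $\|b\|\le \|z\|/2$ we obtain $\|w\|\ge \|b-z\|-\|z-b+w\|\ge \|z\|/4$, and from $\|z-a\|\ge \|z\|/2$ we similarly obtain $\|a-b+w\|=\|(z-b+w)-(z-a)\|\ge \|z\|/4$. Since $d>2N\ge 2(N-k)$ and $d>2k$, the asymptotic \eqref{asymp.Gk} yields
\[
G_{N-k}(w)\,G_{k}(a-b+w)\lesssim \|z\|^{-(d-2(N-k))}\cdot \|z\|^{-(d-2k)}= \|z\|^{-2(d-2N)-2N}.
\]
It remains to sum $G_N(z-b+w)$ over Region B. The substitution $v=z-b+w$ gives
\[
\sum_{w\in \mathrm{Region\ B}} G_N(z-b+w)=\sum_{\|v\|\le \|z\|/4} G_N(v)\asymp \|z\|^{2N},
\]
where the last estimate follows from a dyadic decomposition using $d>2N$. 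Multiplying, Region B contributes at most a constant times $\|z\|^{-2(d-2N)}$. Finally, since $\|a-b\|\le \|a\|+\|b\|\le \|z\|$, we have $G_N(a-b)\gtrsim G_N(z)$, whence $\|z\|^{-2(d-2N)}\asymp G_N(z)^2\lesssim G_N(z)\,G_N(a-b)$, giving the claim.

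\emph{Anticipated difficulty.} There is no deep obstacle: the argument is a careful bookkeeping of the three Green's-function poles at $0$, $b-a$, and $b-z$, two of which lie within distance $\|z\|$ of each other while the third is at distance $\asymp \|z\|$ from both. The only subtle point is ensuring that the exponents in Region B work out, which hinges precisely on the identity $(d-2(N-k))+(d-2k)-2N=2(d-2N)$; this is the reason the bound takes the clean form $G_N(z)G_N(a-b)$ regardless of $k\in\{1,\dots,N-1\}$.
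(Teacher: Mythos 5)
Your proof is correct and is essentially the same as the paper's: both split the sum according to whether $z-b+w$ (equivalently, $u=z-b+w$ after a change of variables the paper performs up front) is large or small compared to $\|z\|$, pull out the far-from-pole factor(s) in each region, and use the convolution identity $G_{N-k}\ast G_k=G_N$ in the far region and the bound $\sum_{\|v\|\le R}G_N(v)\asymp R^{2N}$ in the near region. The only difference is the cosmetic one of when the substitution $v=z-b+w$ is made.
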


\begin{proof}[\bf Proof]
First of all note that for all $\ell, m$, such that $\ell + m\le N$, 
\[
G_\ell \ast G_m = G_{\ell+m}.  
\]
Moreover, a change of variables gives 
$$F_{N,N-k,k}(z,a,b) = \sum_{u\in \mathbb Z^d} G_{N}(u)G_{N-k}(u+b-z) G_{k}(u+a-z). $$ 
 We then have for $\|z\|\ge 2\max(\|a\|,\|b\|)$, using~\eqref{asymp.Gk}, 
\begin{align*}
	&\sum_{\|u\|\geq \|z\|/4} G_{N}(u) G_{N-k}(u+b-z)G_{k}(u+a-z) \\ 
	&\lesssim G_{N}(z)\sum_{u}G_{N-k}(u+b-z)G_{k}(u+a-z) =  G_{N}(z) G_{N}(a-b).
\end{align*}
On the other hand, we also have using again~\eqref{asymp.Gk}, 
\begin{align*}
	&\sum_{\|u\|\le \|z\|/4} G_{N}(u) G_{N-k}(u+b-z)G_{k}(u+a-z) \\ &\lesssim G_{N-k}(z) G_{k}(z) \sum_{\|u\|\le \|z\|/4} G_{N}(u)  
	\asymp \frac{\|z\|^{2N}}{1+\|z\|^{2d-2N}}  \asymp G_{N}(z)^2  \lesssim G_{N}(z) G_{N}(a-b)
\end{align*}
and this finishes the proof.
\end{proof}
The result follows now from a combination of Lemma~\ref{lem.second} and Claim~\ref{claim.second}.

\section{Hitting probabilities and capacities} \label{sec.KSd}
In this section we give the proof of Theorems~\ref{theo.IE} and~\ref{thm.KSd}. We start by giving the proof of Theorem~\ref{theo.IE} assuming Theorem~\ref{thm.KSd} and then give the proof of the latter for which we mainly follow the arguments of~\cite{KS99}, see also~\cite{K03}, which extends
the approach of \cite{FS89}.

\begin{proof}[\bf Proof of Theorem~\ref{theo.IE}]
	The proof follows from the combination of three distinct observations: (i) Theorem~\ref{thm.KSd}
with $N=2$, (ii) the hitting time asymptotics for the infinite invariant tree \reff{hitting-zhu} by Zhu
\cite{Zhu16}, and finally (iii) the fact that $\cpc{}{A}\asymp \cp{d-4}{A}$ proved in \cite{ASS23}.
\end{proof}

The rest of this section is devoted to the proof of Theorem~\ref{thm.KSd}.

Let $X^1,\ldots, X^N$ be i.i.d.\ simple random walks on $\Z^d$ started from $0$ with ranges $\cR_\infty^1, \ldots, \cR_\infty^N$ respectively. For $\gamma>0$ and a probability measure $\nu$ on $A$ we now define with a slight abuse of notation the~$\gamma$-energy of $\nu$   to be
\begin{align}\label{def:energy}
\mathcal E_\gamma(\nu) = \sum_{x,y\in \mathbb Z^d} (1+\|x-y\|)^{-\gamma} \, \nu(x)\nu(y). 	
\end{align}
\paragraph{Lower Bound.}
It suffices to prove that if $\nu$ is a probability measure on $A$, then 
\begin{align}\label{goal.lower}
\frac{\mathbb P\big((z+\mathcal R_\infty^1+ \dots+ \mathcal R_\infty^N)\cap A\neq \emptyset\big)}{G_{N}(z)} \gtrsim \frac{1}{\cE_{d-2N}(\nu)}, 
\end{align}
with an implicit constant that is independent of $\nu$.  
To this end, for any probability $\nu$ with support on $A$, let 
\[
Z_\nu= \sum_{a\in A} \nu(a) \cdot \ell_{z+X^1+\dots+X^N}(a).
\]
Then it is immediate that 
\begin{align}\label{eq:boundusingpayley}
	\mathbb P\big((z+\mathcal R_\infty^1+ \dots+ \mathcal R_\infty^N)\cap A\neq \emptyset\big) \geq \pr{Z_\nu>0} \geq \frac{(\E{Z_\nu})^2}{\E{Z_\nu^2}},
\end{align}
where for the last inequality we used  the Cauchy-Schwarz inequality.
For the first moment of $Z_\nu$, we have for any $z$ with $\|z\|\ge 2 \cdot\textrm{diam}(A)$, using~\eqref{asymp.Gk},
\[
\mathbb E[Z_\nu] = \sum_{a\in A} \nu(a) G_{N}(z-a) \gtrsim G_{N}(z). 
\]
For the second moment, by Lemma~\ref{lem.second.moment}
we have for $z$ with $\|z\|\ge 2 \cdot\textrm{diam}(A)$,
\begin{equation}\label{Z2}
	\E{Z^2_\nu} = \sum_{a,b\in A} \nu(a) \nu(b) V_N(z,a,b) \lesssim G_{N}(z)\sum_{a,b\in A}\nu(a) \nu(b) G_{N}(a-b) \lesssim G_{N}(z) \cE_{d-2N}(\nu).
\end{equation}
Plugging these two bounds into~\eqref{eq:boundusingpayley} yields~\eqref{goal.lower}.

\paragraph{Upper Bound.} We define $N$ random times, which are not stopping times.
\[
T_1=\inf\{t_1\geq 0: \exists \ t_2,\ldots, t_N \ \text{ s.t. } z+X_{t_1}^1+\ldots+X_{t_N}^N \in A\},
\]
and then inductively for $i=2,\ldots, N$
\[
T_i= \inf\{t_i\geq 0: \exists \ t_{i+1},\ldots, t_N \ \text{ s.t. } z+X_{T_1}^1+\ldots+X_{T_{i-1}}^{i-1}+X_{t_{i}}^i+\ldots + X_{t_N}^N \in A\}.
\]
We next define a probability measure $\mu$ on $A$ by setting for $a\in A$ 
\[
\mu(a) =\prcond{z+X_{T_1}^1+\ldots +X_{T_N}^N=a}{T_1<\infty}{}.
\]
It then suffices to prove that for $\|z\|$ sufficiently large 
\begin{equation}\label{goal.upperbound}
\frac{\pr{T_1<\infty}}{G_{N}(z)}\lesssim \frac{1}{\cE_{d-2N}(\mu)}.
\end{equation}

To this end we define the variable $Z_\mu=\sum_{a\in A} \mu(a)\ell_{z+X^1+\dots+X^N}(a)$,
and if $(\cF^i_n)_{n\geq 0}$ stands for the natural filtration of the walk $X^i$, 
then we define the multi-parameter process
\begin{align}\label{eq:productformofm}
\begin{split}
&M(t_1,\ldots ,t_N) = \econd{Z_\mu}{\F_{t_1}^1\otimes \ldots \otimes \F_{t_N}^N} \\
&= \sum_{a\in A} \mu(a)  \sum_{s_1,\ldots, s_N} \sum_{x_1,\ldots, x_{N-1}}  \prcond{X_{s_1}^1=x_1}{\F_{t_1}^1}{}\cdots \prcond{X_{s_N}^N=a-z-x_1-\ldots -x_{N-1}}{\F_{t_N}^N}{}
\end{split}
\end{align}
We then have almost surely for all $t_1,\ldots,t_N\in \N$ 
\begin{align*}
	&M(t_1,\ldots, t_N) \\ 
	&\geq \sum_{a\in A}\mu(a) \sum_{s_1\geq t_1,\ldots,s_N\geq t_N}\sum_{x_1,\ldots,x_{N-1}\in \Z^d} \prcond{X_{s_1}^1=x_1}{\F_{t_1}^1}{}\cdots \prcond{X_{s_N}^N=a-z-x_1-\ldots -x_{N-1}}{\F_{t_N}^N}{} \\
	&=\sum_{a\in A}\mu(a) \sum_{x_1,\ldots,x_{N-1}\in \Z^d} g(X_{t_1}^1-x_1) \cdots g(X_{t_{N-1}}^{N-1}-x_{N-1})g(z+x_1+\ldots+x_{N-1}+X_{t_N}^N-a)\\
	&= \sum_{a\in A}\mu(a) G_{N}(z+X_{t_1}^1+\ldots +X_{t_N}^N-a).
\end{align*}
Therefore, almost surely we get 
\begin{align*}
	\sup_{t_1,\ldots, t_N}M(t_1,\ldots,t_N)\ \geq \ \1(T_1<\infty) \sum_{a\in A}\mu(a) G_{N}(z+X_{T_1}^1+\ldots + X_{T_N}^N-a),
\end{align*}
and hence squaring both sides and taking expectations we obtain
\begin{align*}
	\E{\sup_{t_1,\ldots ,t_N}M^2(t_1,\ldots, t_N)}&\geq \econd{\left(\sum_{a\in A}\mu(a) G_{N}(z+X_{T_1}^1+\ldots+X_{T_N}^N-a) \right)^2}{T_1<\infty} \cdot \pr{T_1<\infty}\\
	& = \sum_{b\in A}\mu(b) \left(\sum_{a\in A}\mu(a)G_N(b-a) \right)^2 \cdot \pr{T_1<\infty}
	\gtrsim (\cE_{d-2N}(\mu))^2\cdot \pr{T_1<\infty},
	\end{align*}
where in the last step we used the Cauchy-Schwarz inequality and~\eqref{asymp.Gk}. 

By monotone convergence it then suffices to prove that for any $u_1,\dots,u_N$, 
\begin{equation}\label{mart.square}
\E{\sup_{t_1\le u_1,\ldots, t_N\le u_N}M^2(t_1,\ldots, t_N)}\lesssim G_{N}(z) \cE_{d-2N}(\mu), 
\end{equation}
with an implicit constant that is independent of $u_1,\dots,u_N$. This together with the inequality above would conclude the proof of~\eqref{goal.upperbound}.

Using the product expression for $M$ from~\eqref{eq:productformofm}, it is easy to check that the process 
$$t_1\mapsto \sup_{t_2\leq u_2,\dots,t_N\le u_N} M(t_1,t_2,\dots,t_N),$$ is a non-negative submartingale with respect to the filtration $(\F_{t_1}^1\vee \F_{u_2}^2\vee \dots \vee \F_{u_N}^N)_{t_1\geq 0}$.
Applying Doob's $L^2$-inequality we then deduce
\begin{align*}
\E{\sup_{t_1\leq u_1}\left(\sup_{t_2\leq u_2,\dots,t_N\le u_N} M^2(t_1,t_2,\dots,t_N)\right)} \leq 4\cdot  \E{\sup_{t_2\leq u_2,\dots,t_N\le u_N} M^2(u_1,t_2,\dots,t_N)}.
\end{align*}
Repeating the same argument $N$ times, gives 
$$\mathbb E\Big[  \sup_{t_1\le u_1, \dots,t_N\le u_N} M^2(t_1,\dots,t_N)\Big] \leq 4^N\cdot  \E{M^2(u_1,\dots,u_N)} \le 4^N\cdot \mathbb E[Z^2_\mu], $$
where for the final inequality we used Jensen's inequality. By~\eqref{Z2} for $z$ with $\|z\|\ge 2 \cdot\textrm{diam}(A)$ we get 
\[
\E{Z^2_\mu}\lesssim G_{N}(z) \cE_{d-2N}(\mu).
\]
Altogether this proves~\eqref{mart.square} and thus completes the proof. 


\section{Hitting probabilities and ergodic limits} \label{sec.charact}
In this section, we prove  Proposition~\ref{thm.charact}.
The proof is divided in two parts. First we prove~\eqref{hit.fA} in Section~\ref{sec.hit.fA}, which is the easiest direction, and then~\eqref{hit.fA.bis} in Section~\ref{sec.hit.fA.bis}, which is slightly more demanding. 

\subsection{Proof of~\eqref{hit.fA}.}\label{sec.hit.fA}
Let $N\ge 2$, and assume that $d>2N$. Recall the definition of the functions $G_N$ from the beginning of Section~\ref{subsec.prelim}. 
Since $\mathcal R_\infty$ and $-\mathcal R_\infty$ have the same law, it amounts to proving that
\begin{align*}
	\liminf_{\|z\|\to\infty} \frac{\pr{(z+\cR_\infty^1+\dots+\cR_\infty^{N-1})\cap (\cR_\infty +A)\neq \emptyset}}{G_N(z)} \gtrsim \widehat f_{N}(A), 
\end{align*}
where $\mathcal R_\infty$ is the range of a random walk $(X_k)_{k\ge 0}$, which is independent of $\mathcal R_\infty^1,\dots, \mathcal R_\infty^{N-1}$, and 
$$ \widehat f_{N}(A) = \lim_{n\to \infty}  \frac{\cp{d-2(N-1)}{\mathcal R_n^1+A}}{n}.$$ 
Let $\epsilon>0$ and let $\tau_r=\inf\{k\geq 0: X_k \notin B(0,r)\}$. 
Then we get for $\|z\|$ large enough, using Theorem~\ref{thm.KSd} for the second inequality, 
\begin{align*}
	& \frac{\pr{(z+\cR_\infty^1+\dots+\mathcal R_\infty^{N-1})\cap (\cR_\infty +A)\neq \emptyset}}{G_N(z)}  \\
	& \geq \frac{\pr{(z+\cR_\infty^1+\dots+\mathcal R_\infty^{N-1})\cap (\cR[0,\epsilon \|z\|^2] +A)\neq \emptyset, \tau_{\|z\|/2}>\epsilon \|z\|^2}}{G_N(z)} \\ 
	& \gtrsim \frac{G_{N-1}(z)}{G_N(z)}\cdot \E{\text{Cap}_{d-2(N-1)}(\cR[0,\epsilon\|z\|^2]+A)\cdot \1(\tau_{\|z\|/2}>\epsilon \|z\|^2)}  \\ 
	& \gtrsim \frac{1}{\|z\|^2}\cdot \E{\text{Cap}_{d-2(N-1)}(\cR[0,\epsilon\|z\|^2]+A)}\\
	&\quad -\frac{1}{\|z\|^2}\cdot \E{\text{Cap}_{d-2(N-1)}(\cR[0,\epsilon\|z\|^2]+A)\cdot \1(\tau_{\|z\|/2}\le \epsilon \|z\|^2)}. 
\end{align*}
As $\|z\|\to\infty$ we have that 
\begin{align*}
\frac{1}{\|z\|^2}\cdot \E{\text{Cap}_{d-2(N-1)}(\cR[0,\epsilon\|z\|^2]+A)} \to \epsilon \widehat f_{N}(A).
\end{align*}
Indeed, the convergence holds in $L^1$
since the sequence $(\tfrac{\text{Cap}_\gamma(\mathcal R_n+A)}{n})_n$ is uniformly bounded by some deterministic constant, for any $\gamma>0$.  Furthermore, by Cauchy-Schwarz we get 
\begin{align*}
	&\E{\text{Cap}_{d-2(N-1)}(\cR[0,\epsilon\|z\|^2]+A)\cdot \1(\tau_{\|z\|/2}\le \epsilon \|z\|^2)}\\
	&\qquad\qquad 
	\leq \sqrt{\E{\text{Cap}_{d-2(N-1)}(\cR[0,\epsilon\|z\|^2]+A)^2} \pr{\tau_{\|z\|/2}\le \epsilon \|z\|^2}}.
\end{align*}
By a standard random walk estimate we get for a positive constant $c$ that 
\[
\pr{\tau_{\|z\|/2}\le \epsilon \|z\|^2}\leq \exp(-c/\epsilon).
\]
Using again that $(\tfrac{\text{Cap}_{d-2(N-1)}(\cR_n+A)}{n})_n$ is bounded we get that it also converges to $\widehat f_{N}(A)$ in $L^2$. 
Hence this gives for $\|z\|$ sufficiently large
\begin{align*}
	\E{\text{Cap}_{d-2(N-1)}(\cR[0,\epsilon\|z\|^2]+A)^2} \leq 2 \|z\|^4\cdot \widehat f_{N}(A)^2,
\end{align*}
using also that $\widehat f_{N}(A)$ is positive (since $d\ge 5$). Therefore we get 
\begin{align*}
	\E{\text{Cap}_{d-2(N-1)}(\cR[0,\epsilon\|z\|^2]+A)\cdot \1(\tau_{\|z\|/2}\le \epsilon \|z\|^2)} \lesssim \|z\|^2 \widehat f_{N}(A) \exp(-c/(2\epsilon)).
	\end{align*}
Putting everything together now gives that for $\|z\|$ sufficiently large
\begin{align*}
	\frac{\pr{(z+\cR_\infty^1+\dots+\mathcal R_\infty^{N-1})\cap (\cR_\infty +A)\neq \emptyset}}{G_N(z)} \gtrsim \epsilon \widehat f_{N}(A)  - \exp(-c/(2\epsilon)) \widehat f_{N}(A) \gtrsim \widehat f_{N}(A), 
	\end{align*}
by taking $\epsilon$ sufficiently small. This finishes the proof. \hfill $\square$


\subsection{Proof of~\eqref{hit.fA.bis}.}\label{sec.hit.fA.bis}
We use here the same notation as in Section~\ref{sec.hit.fA}. 
We define for $i\in \mathbb Z$, $r_i = 2^i\|z\|$, and let $I$ be the maximal index such that $r_{-I} \ge 4\,  \textrm{diam}(A)$. 
Now for $i\ge -I$, define 
$$\mathcal B_i = \partial B(z,r_{i+1}) \cup \partial B(z,r_{i-1}).$$ 
and for $i\ge -I$, and $k\ge 0$, let 
$$\tau_i^k = \inf \Big\{n\ge \sigma_i^{k-1} : X_n \in \partial B(z, r_i)\Big\}, \quad \textrm{and}\quad  
\sigma_i^k = \inf\Big\{n \ge \tau_i^k : X_n \in \mathcal  B_i\Big\},$$ 
with the convention $\sigma_i^{-1} =0$. To simplify notation we will also write $\tau_i = \tau_i^0$ and $\sigma_i = \sigma_i^0$, for $i\ge -I$. 
Note that by definition one has $\tau_0 = 0$. 
Then let for $i\ge -I$, 
$$\mathcal R_{(i)} = \bigcup_{k\ge 0} \mathcal R[\tau_i^k,\sigma_i^k].$$
Observe that on the event $\{\tau_{-I} = \infty\}$, one has 
$$\mathcal R_\infty = \bigcup_{i\ge -I} \mathcal R_{(i)}.$$
By splitting the set $\mathcal R_{(i)} +A$ into subsets of diameter $r_i/100$ each, we get for $i\geq -I$ using Theorem~\ref{thm.KSd}
\begin{align*}
	\mathbb P\big((z + \mathcal R_\infty^1+\dots+\mathcal R_\infty^{N-1}) \cap (\mathcal R_{(i)} + A)\neq \emptyset \big) \lesssim G_{N-1}(r_i) \cdot  \mathbb E\Big[ \text{Cap}_{d-2(N-1)}(\mathcal R_{(i)} +A)\Big].
\end{align*}
Using a union bound and the above we now get 
\begin{align}\label{upper.intersection}
& \nonumber \mathbb P\big((z + \mathcal R_\infty^1+\dots+\mathcal R_\infty^{N-1}) \cap (\mathcal R_\infty + A)\neq \emptyset \big) \\
& \nonumber \le \mathbb P(\tau_{-I}<\infty) + \sum_{i=-I}^\infty \mathbb P\big((z + \mathcal R_\infty^1+\dots+\mathcal R_\infty^{N-1}) \cap (\mathcal R_{(i)} + A)\neq \emptyset \big)  \\
& \lesssim \frac{g(z)}{g(\textrm{diam}(A))} + \sum_{i= -I}^\infty G_{N-1}(r_i)\cdot  \mathbb E\Big[ \text{Cap}_{d-2(N-1)}(\mathcal R_{(i)} +A)\Big].  
\end{align}
Now one has for any $-I\le i < 0$, using the transience of the walk, and writing $\mathbb E_x$ for the expectation with respect to the law of a simple random walk starting from $x$, 
\begin{align}\label{eq.Ri1}
\nonumber \mathbb E\Big[ \text{Cap}_{d-2(N-1)}(\mathcal R_{(i)} +A)\Big] & \le \mathbb P(\tau_i<\infty) \cdot \sup_{x\in \partial B(z,r_i)} 
\mathbb E_x\Big[ \text{Cap}_{d-2(N-1)}(\mathcal R_{(i)} +A)\Big]\\
\nonumber & \lesssim \frac{g(z)}{g(r_i)} \cdot \sup_{x\in \partial B(z,r_i)}  \Big(\sum_{k\ge 0}\mathbb P_x(\tau_i^k<\infty)\Big) \cdot 
\mathbb E_x\Big[ \text{Cap}_{d-2(N-1)}(\mathcal R[0,\sigma_i] +A)\Big]\\ 
& \lesssim \frac{g(z)}{g(r_i)} \cdot \sup_{x\in \partial B(z,r_i)}  \mathbb E_x\Big[ \text{Cap}_{d-2(N-1)}(\mathcal R[0,\sigma_i] +A)\Big]. 
\end{align}
Likewise for any $i\ge 0$, one has 
\begin{equation}\label{eq.Ri2}
\mathbb E\Big[ \text{Cap}_{d-2(N-1)}(\mathcal R_{(i)} +A)\Big] \lesssim \sup_{x\in \partial B(z,r_i)}  \mathbb E_x\Big[ \text{Cap}_{d-2(N-1)}(\mathcal R[0,\sigma_i] +A)\Big]. 
\end{equation} 
Now we claim that for any $i\ge -I$, one has 
\begin{equation}\label{claim.i}
\sup_{x\in \partial B(z,r_i)}  \mathbb E_x\Big[ \text{Cap}_{d-2(N-1)}(\mathcal R[0,\sigma_i] +A)\Big] \lesssim \widehat f_{N}(A) \cdot r_i^2. 
\end{equation}
Let us postpone the proof of the claim and conclude the proof of~\eqref{hit.fA.bis}. Plugging~\eqref{claim.i} into~\eqref{eq.Ri1} and~\eqref{eq.Ri2}, and using~\eqref{upper.intersection} we get using that $d\ge 1+2N$, 
$$\mathbb P\big((z + \mathcal R_\infty^1+\dots+\mathcal R_\infty^{N-1}) \cap (\mathcal R_\infty + A)\neq \emptyset \big)  \lesssim 
\frac{g(z)}{g(\textrm{diam}(A))} + \widehat f_{N}(A) \cdot G_N(z).$$
Dividing both sides by $G_N(z)$, and letting $\|z\|\to\infty$ concludes the proof of~\eqref{hit.fA.bis}.

Thus it only remains to prove the claim~\eqref{claim.i}. 
For this one can just write, using monotonicity of $\gamma$-capacities, for any $x\in \partial B(z,r_i)$, and some constant $c>0$, 
\begin{align*}
 \mathbb E_x\Big[ \text{Cap}_{d-2(N-1)} (\mathcal R[0,\sigma_i] +A) \Big] & = \sum_{k\ge 0}  \mathbb E_x\Big[ \text{Cap}_{d-2(N-1)}(\mathcal R[0,\sigma_i] +A) \cdot 
 \1( k r_i^2\le \sigma_i < (k+1)r_i^2)\Big] \\
 & \lesssim \sum_{k\ge 0}  \mathbb E_x\Big[ \text{Cap}_{d-2(N-1)}(\mathcal R[0,(k+1)r_i^2] +A) \cdot \1(\sigma_i \ge  k r_i^2)\Big] \\
 &  \lesssim \sum_{k\ge 0}  \mathbb E_x\Big[ \text{Cap}_{d-2(N-1)}(\mathcal R[0,(k+1)r_i^2] +A)^2\Big]^{1/2}  \cdot \mathbb P(\sigma_i \ge  k r_i^2)^{1/2} \\
 & \lesssim \widehat f_{N}(A) \cdot \sum_{k\ge 0} kr_i^2 \cdot \exp(-ck) \lesssim \widehat f_{N}(A) \cdot r_i^2, 
\end{align*}
as wanted, where for the penultimate bound we used again that $(\tfrac{\text{Cap}_{d-2(N-1)}(\cR_n+A)}{n})_n$ is bounded we get that it also converges to $\widehat f_{N}(A)$ in $L^2$. 
. \hfill $\square$

\section{Capacity of the sausage in $d=4$}\label{sec.d4}
In this Section, we probe Proposition~\ref{thm.d4}.
Recall that we assume here that $d=4$. Following the notation of~\cite{L91}, we set $a_4= 2/\pi^2$. 
\begin{claim}\label{cl:standardfromlawler}
\emph{	Fix $M>0$. There exists a positive constant $c$ and $n_0$ so that for all $n\geq n_0$, if $\xi$ is a geometric random variable of parameter $1/n$, then for all $x$ with $\|x\|\leq M$ we have for all $\epsilon>0$
	\[
	\pr{\left|\sum_{k=0}^{\xi}g(X_k-x)- 2a_4 (\log n)\right|\geq \epsilon \log n} \leq \frac{c}{\epsilon^2\log n}.
	\]}
\end{claim}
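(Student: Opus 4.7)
The bound is a Chebyshev inequality applied to $S_n := \sum_{k=0}^{\xi}g(X_k-x)$. It suffices to show that uniformly in $\|x\|\le M$,
\[
\mathbb{E}[S_n] = 2a_4\log n + O(1) \qquad \text{and} \qquad \mathrm{Var}(S_n) = O(\log n).
\]
Once these are established, for $n$ large enough (depending on $\epsilon$) the $O(1)$ bias is below $(\epsilon/2)\log n$, so $|S_n - 2a_4\log n|\ge \epsilon\log n$ forces $|S_n-\mathbb{E}[S_n]|\ge (\epsilon/2)\log n$, and Chebyshev delivers a bound of order $1/(\epsilon^2\log n)$. In the remaining range where $\epsilon\log n = O(1)$, the announced bound is itself $\gtrsim 1$ and hence trivial.

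For the first moment, $\mathbb{P}(\xi\ge k)=(1-1/n)^k$ together with $g(\cdot)=\sum_{j\ge 0}p_j(0,\cdot)$ and Chapman--Kolmogorov gives
\[
\mathbb{E}[g(X_k-x)]=\sum_{m\ge k}p_m(0,x),\qquad
\mathbb{E}[S_n]=\sum_{m\ge 0}p_m(0,x)\cdot n\bigl(1-(1-1/n)^{m+1}\bigr).
\]
The weight $n(1-(1-1/n)^{m+1})$ is of order $m\wedge n$, so $\mathbb{E}[S_n]$ is comparable to $\sum_{m=1}^n m\,p_m(0,x)$. In $d=4$ the local central limit theorem supplies $p_m(0,x)\sim 8/(\pi^2 m^2)$ on the admissible parity class, uniformly in $\|x\|\le M$, so summation gives $2a_4\log n + O(1)$ with the correct constant since $a_4=2/\pi^2$. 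This is exactly one of the asymptotics collected in~\cite[Sect.~3.4]{L91}.

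For the variance, the Markov property at time $k\le j$ yields
\[
\mathbb{E}[g(X_k-x)g(X_j-x)] = \mathbb{E}\bigl[g(X_k-x)\,h_{j-k}(X_k-x)\bigr],\qquad
h_m(y):=\mathbb{E}[g(y+X_m)]=\sum_{\ell\ge m}p_\ell(0,y),
\]
and likewise $\mathbb{E}[g(X_j-x)]=\mathbb{E}[h_{j-k}(X_k-x)]$, so (after absorbing the small difference between $(1-1/n)^{\max(k,j)}$ and $(1-1/n)^{k+j}$, which contributes $O(1)$) the variance reduces to a weighted double sum of covariances $\mathrm{Cov}\bigl(g(X_k-x),h_{j-k}(X_k-x)\bigr)$ plus a diagonal term bounded by $g(0)\,\mathbb{E}[S_n]=O(\log n)$ since $g$ is bounded in $d=4$. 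Using the uniform estimate $h_m(y)\lesssim 1/(m+\|y\|^2+1)$ and the fact that on the typical scale $\|X_k-x\|\asymp\sqrt{k}$ the function $h_{j-k}(\cdot)$ is nearly constant whenever $j-k\gg k$, while in the regime $j-k\lesssim k$ both $B_{k,j}$ and $A_kA_j$ are of order $1/k^2$ and cancel to leading order, the off-diagonal covariances sum to $O(\log n)$. This covariance cancellation is the main technical obstacle, and it is of exactly the type carried out in~\cite[Sect.~3.4]{L91} for intersection functionals of four-dimensional walks; putting the pieces together completes the plan.
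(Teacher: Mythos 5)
Your route is genuinely different from the paper's, and while it is viable in principle, it leaves a substantive gap at precisely the point where the paper takes a shortcut. The paper does \emph{not} re-derive the moment asymptotics for general $x$. Instead it writes $\sum_{k=0}^\xi g(X_k-x) = \sum_{k=0}^\xi g(X_k) + \Delta$ with $\Delta = \sum_{k=0}^\xi\big(g(X_k-x)-g(X_k)\big)$, uses the gradient estimate $|g(y-x)-g(y)|\lesssim (1+\|y\|)^{-3}$ (Lawler, Thm.~1.5.5) to bound $|\Delta|$ by $\sum_{k\geq 0}(1+\|X_k\|)^{-3}$, observes that this dominating series has finite expectation in $d=4$, and applies Markov's inequality to get $\pr{|\Delta|\geq (\epsilon/2)\log n}\lesssim 1/(\epsilon\log n)$. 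The $x=0$ statement with the $1/(\epsilon^2\log n)$ bound is then exactly Lawler's Lemma~4.2.1, cited as a black box. The paper's approach thus replaces the entire first- and second-moment analysis for general $x$ by a one-line reduction.

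Your plan, by contrast, amounts to re-proving Lawler's Lemma~4.2.1 with a shifted Green's function, and the variance bound is where the real difficulty sits. Your first-moment computation is sound (the identity $\E{g(X_k-x)} = \sum_{m\geq k}p_m(0,x)$, the interchange of sums, and the LCLT-based evaluation are all correct and give the right constant $2a_4$). But the variance argument is asserted rather than proven: the phrases ``the off-diagonal covariances sum to $O(\log n)$'' and ``both $B_{k,j}$ and $A_kA_j$ are of order $1/k^2$ and cancel to leading order'' paper over the core technical content, which in Lawler's proof occupies a nontrivial stretch of Sections~3.4--4.2. You also introduce the undefined symbols $A_k$, $B_{k,j}$ mid-argument. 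If you wish to pursue the direct computation, you need to either spell out the covariance cancellation (the crucial identity is that $\mathrm{Cov}(g(X_k-x), h_{j-k}(X_k-x))$ can be rewritten, after a last-exit decomposition, so that the leading $\log n$ contribution from the diagonal is the only surviving term) or, more sensibly, adopt the paper's reduction: prove that $\Delta$ is dominated by an a.s.\ finite random variable and then invoke Lemma~4.2.1. The second option is strictly less work and is what the paper does.
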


\begin{proof}[\bf Proof]
Let 
$$\Delta = \sum_{k=0}^{\xi}\big(g(X_k-x)-g(X_k)\big). $$ 
By the gradient estimate for Green's function (see e.g. Theorem 1.5.5 in~\cite{L91}), one has 
$$|g(X_k-x)- g(X_k)|\lesssim \frac{1}{1+\|X_k\|^{3}}.$$ 
Furthermore, a standard computation gives 
$$ \E{\sum_{k=0}^\infty \frac 1{1+\|X_k\|^3}} = \sum_{x\in \mathbb Z^4} \frac{g(x)}{1+\|x\|^{3}} <\infty. $$ 
Therefore Markov's inequality gives that 
$$\mathbb P\big(|\Delta|\ge \frac{\varepsilon}{2} \log n\big) \lesssim  \frac{1}{\varepsilon \log n}.$$
To conclude the proof we use the concentration results proved by Lawler. Indeed, he shows in Lemma 4.2.1 in~\cite{L91} that 
	\begin{align*}
		\pr{\left|\sum_{k=0}^{\xi}g(X_k)- 2a_4 (\log n)\right|\geq \frac{\epsilon}{2} \log n} \lesssim \frac{1}{\varepsilon^2 \log n}.
	\end{align*}
	\end{proof}
	
We now introduce some notation. Fix a set $A$ and let $X$ be a simple random walk with range $\cR$. Let $\til{\cR}$ be an independent range. Let $\xi_n^\ell$ and $\xi_n^r$ be two independent geometric random variables of parameter $1/n$. For every $x\in A$ we set 
\begin{align*}
	\mathcal{A}_n^x \quad &= \quad \1((x+\widetilde{\cR}_\infty)\cap (\cR[-\xi_n^{\ell},\xi_n^{r}]+A) = \emptyset) \\
	e_n^x\quad &=\quad  \1(x\notin (\cR[1,\xi_n^r]+A))\\
	\cU_n^x\quad  &=\quad  \sum_{y\in A} \sum_{-\xi_n^\ell\leq k\leq \xi_n^r} g(x,X_k+y).
\end{align*}

\begin{lemma}\label{thm:lawler}
	\emph{We have
	\[
	\sum_{x\in A} \E{\1(\cA_n^x)\cdot e_n^x \cdot \cU_n^x} = |A|.
	\]}
\end{lemma}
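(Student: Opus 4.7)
The plan is to condition on $(X, \xi_n^\ell, \xi_n^r)$ so that $B := \cR[-\xi_n^\ell, \xi_n^r] + A$ is a fixed finite set and the only remaining randomness in $\1(\cA_n^x)$ is from $\widetilde X$. Interpreting $\widetilde{\cR}_\infty$ as excluding the origin of $\widetilde X$ (otherwise the event is trivially empty because $x\in A\subseteq B$ whenever $0\in\cR$), the strong Markov property gives
\[
\E{\1(\cA_n^x) \mid X,\xi} = \mathrm{es}_B(x),
\]
the escape probability from $B$ of a simple random walk started at $x$. Since $e_n^x$ and $\cU_n^x$ are $(X,\xi)$-measurable, the target reduces to showing
\[
\E{\sum_{x\in A} e_n^x\cU_n^x\, \mathrm{es}_B(x)} = |A|.
\]

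The core tool I would use is the classical last-exit decomposition: for any finite $B$ and any $x\in B$,
\[
\sum_{z\in B} g(z-x)\mathrm{es}_B(z)=1,
\]
which follows from decomposing the a.s.\ finite last visit to $B$ of a transient SRW started at $x\in B$. Summing over $x\in A\subseteq B$ yields the deterministic identity $\sum_{x\in A}\sum_{z\in B}g(z-x)\mathrm{es}_B(z)=|A|$.

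To match the target to this, I would expand $\cU_n^x = \sum_{k,y} g(X_k+y-x) = \sum_{z\in B} M_1(z) g(z-x)$, where $M_1(z) = |\{(k,y)\colon X_k+y=z\}|$ is the multiplicity of $z$ in the multiset $\cR+A$. Compared with the classical identity, the target has two discrepancies: the escape factor is evaluated at $x\in A$ rather than at $z\in B$, and there is an extra multiplicity $M_1(z)$. The selector $e_n^x = \1(x\notin \cR[1,\xi_n^r]+A)$ is precisely the device needed to restrict the $x$-sum to canonical representations; combined with a time-shift/reversal of the bi-infinite walk $X$ (which, by translation invariance, converts $\mathrm{es}_B(x)$ into $\mathrm{es}_B(z)$), this should absorb the $M_1$-factor and recover the classical identity term by term.

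The main obstacle is the rigorous implementation of the shift/canonical-representation argument: the shift of the bi-infinite $X$ is measure-preserving in law, but the geometric killing times $\xi_n^\ell,\xi_n^r$ must also be absorbed into the measure-preserving map, and one has to verify that $e_n^x$ exactly selects, on average, one $(k,y)$-representation per point of $\cR+A$ so that the multiplicity $M_1$ cancels. This combinatorial/measure-theoretic bookkeeping is the technical heart of the proof; once completed, the identity is an immediate consequence of the classical last-exit decomposition.
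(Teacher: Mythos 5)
Your outline identifies the right toolbox — conditioning on $(X,\xi)$, the last-exit decomposition, the selector role of $e_n^x$, and a shift argument to move the escape factor — but you have explicitly declared the implementation open, and that implementation is precisely where the lemma is proved. Two concrete things are missing. First, the fact that makes the "shift absorbed into the measure" exact is the observation that, conditionally on $\xi_n^\ell+\xi_n^r=m$ and on the increments of the walk, the split $\xi_n^\ell$ is uniform on $\{0,\dots,m\}$ (since $\pr{\xi_n^\ell=j,\,\xi_n^r=m-j}$ does not depend on $j$). The paper therefore decomposes the expectation by the total length $m$, the increments $(x_1,\dots,x_m)$ with $x_0=0$, and the split $j$, the last with weight $1/(m+1)$; after recentring at $X_{-\xi_n^\ell}$ one gets $e_n^x=\1(x+x_j\notin \{x_{j+1},\dots,x_m\}+A)$, $\econd{\1(\cA_n^x)}{X,\xi}=\mathrm{es}_B(x+x_j)$ with $B=\{x_0,\dots,x_m\}+A$, and $\cU_n^x=\sum_{y\in A}\sum_{k=0}^m g(x+x_j-x_k-y)$. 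Second, the role of $e_n^x$ is sharper than "on average one representation": for each $z\in B$ it deterministically singles out the unique pair $(x,j)$ with $x+x_j=z$, $x\in A$, and $j$ maximal, so that summing over $(x,j)$ with the $e_n^x$-indicator is the same as summing over $z\in B$. With this, for each fixed starting point $x_k+y\in B$ the last-exit decomposition gives $\sum_{x\in A}\sum_{j=0}^m \1(\cdots)\,\mathrm{es}_B(x+x_j)\,g(x+x_j-x_k-y)=1$; summing the resulting $1$ over $(k,y)\in\{0,\dots,m\}\times A$ gives $(m+1)|A|$, cancelling the $1/(m+1)$ and yielding $|A|$. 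This per-starting-point use of the last-exit identity bypasses your multiplicity $M_1(z)$ entirely, and is cleaner than trying to absorb it. So the proposal is pointed in the right direction, but its stated "technical heart" — the uniform-split conditioning and the exact per-point cancellation — is the proof, and it has not been supplied.
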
	
	
	\begin{proof}[\bf Proof]
	
	For every nearest neighbour path $(x_1,\ldots, x_m)$ we define
	\[
	B(m,x_1,\ldots,x_m) = \{ \xi_n^\ell + \xi_n^r = m, \ X_{-\xi_n^\ell + k}-X_{-\xi_n^\ell} = x_k, \ \forall \ 1\leq k\leq m\}, 
	\]
	and for all $0\leq j\leq m$ we define
	\[
	B(m,j,x_1,\ldots,x_m) = \{\xi_n^\ell = j, \ \xi_n^r=m-j, \ X_{-\xi_n^\ell + k}-X_{-\xi_n^\ell} = x_k, \ \forall \ 1\leq k\leq m\}.
	\]
	Using the independence of the increments of the walk and the geometric random variables we then obtain 
	\[
	\prcond{B(m,j,x_1,\ldots,x_m)}{B(m,x_1,\ldots, x_m)}{} = \frac{1}{m+1}.
	\]
	Setting $x_0=0$, we then have 
	\begin{align*}
		\sum_{x\in A} &\E{\1(\cA_n^x)\cdot e_n^x \cdot \cU_n^x} \\
		=& \sum_{x\in A} \sum_{m=0}^{\infty}\sum_{(x_1,\ldots,x_m)} \frac{\pr{B(m,x_1,\ldots,x_m)}}{m+1}\cdot \sum_{k=0}^{m} \sum_{j=0}^{m}  \1(x+x_j\notin (\{x_{j+1},\ldots,x_m\}+A)) \\
		&\times \pr{(x+x_j+\til{\cR}_\infty)\cap (\{x_0,x_1,\ldots,x_m \}+A)=\emptyset}\sum_{y\in A} g(x+x_j-x_k-y).
	\end{align*}
		Using the last exit decomposition formula to the set $\{x_0,\ldots,x_m\}$  and the starting point $x_k+y$ we get 
				\begin{align*}
		1= \sum_{x\in A}\sum_{j=0}^{m}\1(x+x_j&\notin (\{x_{j+1},\ldots, x_m\}+A))\\ &\times \pr{(x+x_j+\til{\cR}_\infty)\cap (\{x_0,x_1,\ldots,x_m \}+A)=\emptyset} g(x+x_j-x_k-y) .
		\end{align*}
		Substituting this above we obtain
		\begin{align*}
			\sum_{x\in A} \E{\1(\cA_n^x)\cdot e_n^x \cdot \cU_n^x} = \sum_{m=0}^{\infty} \sum_{(x_1,\ldots,x_m)}\pr{B(m,x_1,\ldots,x_m)}\cdot |A|= |A|, 
		\end{align*}
		and this concludes the proof.
	\end{proof}

	\begin{lemma}\label{lem:sumoverA}
\emph{We have 
\[
\sum_{x\in A} \E{\1(\cA_n^x)\cdot e_n^x} = (1+o(1))\cdot \frac{|A|}{4a_4\log n}. 
\]	}	
	\end{lemma}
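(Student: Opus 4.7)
The starting point is Lemma~\ref{thm:lawler}, which gives the exact identity $\sum_{x\in A}\E{\1(\cA_n^x)\,e_n^x\,\cU_n^x}=|A|$. If the weight $\cU_n^x$ were replaced by its typical value (of order $\log n$), dividing both sides of this identity would immediately produce the asserted asymptotic; all the work therefore lies in showing that $\cU_n^x/\log n$ concentrates sharply enough to allow this replacement.

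First I would show that $\cU_n^x/\log n$ concentrates on a deterministic constant uniformly in $x\in A$. By the double-sided convention for $X$, the sequence $(X_{-k})_{k\ge 0}$ is a simple random walk from the origin independent of $(X_k)_{k\ge 0}$, so for each fixed $y\in A$ the inner sum $\sum_{-\xi_n^\ell\le k\le \xi_n^r} g(X_k+y-x)$ decomposes into independent forward and backward parts. Because $A-A$ is bounded, Claim~\ref{cl:standardfromlawler} applied with $y-x$ in place of $x$ shows that each part concentrates around $2a_4\log n$, with the quantitative tail bound $O(\varepsilon^{-2}/\log n)$. Summing over $y\in A$ and combining the two parts yields the required concentration for $\cU_n^x$.

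Next I would upgrade this in-probability concentration to a statement compatible with the identity of Lemma~\ref{thm:lawler}. For this I would establish the $L^2$ bound $\E{(\cU_n^x)^2}\lesssim (\log n)^2$, uniformly in $x\in A$. This follows by expanding the square, applying the Markov property at the smaller of the two time indices, and using that in dimension four the convolution $g\ast g$ is essentially the Green's function of a two-parameter additive walk, so that integrating it against $g$ yields only one logarithmic factor; this is morally the four-dimensional analogue of the computation behind Lemma~\ref{lem.second.moment}.

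With both ingredients in place, writing $\cU_n^x = c\log n + D_n^x$ and inserting this into the identity of Lemma~\ref{thm:lawler} reduces the problem to controlling the error $\sum_{x\in A}\E{\1(\cA_n^x)\,e_n^x\,D_n^x}$ by a quantity that is $o(\log n)\cdot\sum_{x\in A}\E{\1(\cA_n^x)\,e_n^x}$; this is handled by Cauchy-Schwarz using the $L^2$ bound on $D_n^x$ together with the in-probability concentration of the previous step. The main obstacle is the $L^2$ estimate on $\cU_n^x$, which requires a genuine second-moment computation; a secondary subtlety is that the error must be compared to $\sum_x\E{\1(\cA_n^x)\,e_n^x}$ itself rather than to $1$, so one should feed into Cauchy-Schwarz a crude a priori upper bound of order $1/\log n$ for this sum, obtained for instance by truncating $\cU_n^x$ from below by a fixed multiple of $\log n$ inside the identity of Lemma~\ref{thm:lawler}.
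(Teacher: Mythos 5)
Your high-level skeleton is the right one and matches the paper: start from the exact identity in Lemma~\ref{thm:lawler}, show $\cU_n^x$ concentrates around $4a_4\log n$, and control the error from replacing $\cU_n^x$ by a constant. But the step where you close the argument does not go through as stated, and the missing ingredient is precisely the crux of the paper's proof.

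Concretely, after splitting into $\{|D_n^x|\le\epsilon\log n\}$ and its complement $B_x$, the good piece gives $\epsilon\log n\cdot\E{\1(\cA_n^x)e_n^x}$ as you intend. For the bad piece, Cauchy--Schwarz gives
$\E{\1(\cA_n^x)e_n^x|D_n^x|\1(B_x)}\le\sqrt{\pr{\cA_n^x\cap B_x}\cdot\var(\cU_n^x)}$.
Even with the sharp Lawler variance bound $\var(\cU_n^x)\lesssim\log n$, if you only know $\pr{\cA_n^x\cap B_x}\lesssim\min(\pr{\cA_n^x},\pr{B_x})\lesssim 1/\log n$, this is $O(1)$, which after dividing by $4a_4\log n$ is $O(1/\log n)$ --- the same order as the main term, not $o(1/\log n)$. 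So the proof does not close. (Your proposed second-moment bound $\E{(\cU_n^x)^2}\lesssim(\log n)^2$ is also the wrong quantity: it is implied by, but strictly weaker than, the variance bound $\var(\cU_n^x)\lesssim\log n$; using $\E{(D_n^x)^2}\lesssim(\log n)^2$ in Cauchy--Schwarz would make things worse, giving $\sqrt{\log n}$.)

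The missing idea is that one must exploit the \emph{joint} structure of $\cA_n^x$ and $B_x$, not bound them separately. The paper splits $\cU_n^x$ into its forward part (depending only on $\cR[0,\xi_n^r]$) and backward part (depending only on $\cR[-\xi_n^\ell,0]$), and correspondingly $B_x\subseteq B_x^1\cup B_x^2$. Since $\cA_n^x$ implies both partial non-intersection events, $\cA_n^x\cap B_x^2$ is contained in the intersection of the backward non-intersection event and $B_x^2$, and these depend on independent pieces of the trajectory. Hence $\pr{\cA_n^x\cap B_x}\lesssim\pr{\til{\cR}_\infty\cap\cR[-\xi_n^\ell,0]=\emptyset}\cdot\pr{B_x^2}\lesssim\frac{1}{\sqrt{\log n}}\cdot\frac{1}{\log n}$, using Lawler's non-intersection estimate together with Claim~\ref{cl:standardfromlawler}. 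Plugging this extra $(\log n)^{-1/2}$ factor into Cauchy--Schwarz gives the needed $o(1)$ bound. Without this decoupling, no bound obtained purely from the variance of $\cU_n^x$ and a crude $1/\log n$ upper bound on $\pr{\cA_n^x}$ can beat $O(1)$.
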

	
\begin{proof}[\bf Proof]
We have from Lemma~\ref{thm:lawler} that 
\[
\sum_{x\in A} \E{\1(\cA_n^x)\cdot e_n^x \cdot \cU_n^x} = |A|.
\]
We now get 
\begin{align*}
	\sum_{x\in A}\E{\1(\cA_n^x)\cdot e_n^x }&=  \frac{|A|}{4a_4\log n} + \frac{1}{4a_4\log n}\cdot \sum_{x\in A} \E{\1(\cA_n^x)\cdot e_n^x \cdot (4a_4\log n -\cU_n^x)}. 
\end{align*}
For every $x\in A$ and $\epsilon>0$ we let 
\[
B_x = \{|\cU_n^x - \E{\cU_n^x}|\geq \epsilon \log n \}.
\] 
Then we have 
\begin{align*}
	\E{\1(\cA_n^x)\cdot e_n^x\cdot |\E{\cU_n^x}-\cU_n^x| } \leq \epsilon\log n \cdot \E{\1(\cA_n^x)\cdot e_n^x}  + \E{\1(\cA_n^x)\cdot |\E{\cU_n^x} - \cU_n^x| \cdot \1(B_x)}.
\end{align*}
We now explain that it suffices to prove that 
\begin{align}\label{eq:maingoal}
	\E{\1(\cA_n^x)\cdot |\E{\cU_n^x} - \cU_n^x| \cdot \1(B_x)} \lesssim \frac{1}{(\log n)^{3/2}}.
\end{align}
	Indeed, once this is established, then we get 
	\begin{align*}
		\left|\sum_{x\in A}\E{\1(\cA_n^x)\cdot e_n^x } -  \frac{|A|}{4a_4(\log n)}\right|\leq \epsilon\sum_{x\in A} \E{\1(\cA_n^x) \cdot e_n^x} + \mathcal O\left(\frac{1}{(\log n)^{3/2}}\right), 
 	\end{align*}
	and since this holds for any $\varepsilon>0$, this concludes the proof.
	So we now turn to prove~\eqref{eq:maingoal}. By the Cauchy-Schwarz inequality we obtain
	\begin{align*}
		\E{\1(\cA_n^x)\cdot |\E{\cU_n^x} - \cU_n^x| \cdot \1(B_x)} \leq \sqrt{\pr{\cA_n^x\cap B_x} \cdot \E{(\E{\cU_n^x} - \cU_n^x)^2}} \leq  \sqrt{\pr{\cA_n^x\cap B_x} \cdot \log n}
	\end{align*}
	using Lawler's estimate for the last inequality. It remains to bound the last probability appearing above. To do this we define 
	\[
	\cU_n^{x,1} = \sum_{k=-\xi_n^\ell}^{0} g(x,X_k) \quad \text{and}\quad  \cU_n^{x,2} = \sum_{k=0}^{\xi_n^r} g(x,X_k),
	\]
	and also two events for $i=1,2$
	\[
	B_x^i = \{|\cU_n^{x,i}-2a_4\log n| \geq \epsilon\log n/4\}.
	\]
	Then it is clear that $B_x\subseteq B_x^1\cup B_x^2$, at least for $n$ large enough, and we have 
	\begin{align*}
		\pr{\cA_n^x\cap B_x}&\leq \pr{(x+\til{\cR}_\infty)\cap (\cR[-\xi_n^\ell,0]+A)=\emptyset, B_x^2} + \pr{(x+\til{\cR}_\infty)\cap (\cR[0,\xi_n^r]+A)=\emptyset, B_x^1} \\
		&= 2\pr{(x+\til{\cR}_\infty)\cap (\cR[-\xi_n^\ell,0]+A)=\emptyset}\pr{B_x^2}.
	\end{align*}
	Since $x\in A$ we get 
	\[
	\pr{(x+\til{\cR}_\infty)\cap (\cR[-\xi_n^\ell,0]+A)=\emptyset} \leq \pr{\til{\cR}_\infty\cap \cR[-\xi_n^\ell,0] = \emptyset} \lesssim \frac{1}{\sqrt{\log n}}, 
	\]
	using Corollary 3.7.1 in~\cite{L91} for the last inequality. 
	By Claim~\ref{cl:standardfromlawler} we get that 
	\[
	\pr{B_x^2}\lesssim \frac{1}{\log n},
	\]
	and hence altogether this gives 
	\begin{align*}
		\E{\1(\cA_n^x)\cdot |\E{\cU_n^x} - \cU_n^x| \cdot \1(B_x)} \lesssim \frac{1}{(\log n)^{3/2}}, 
	\end{align*}
	and this concludes the proof.
\end{proof}

	\begin{proof}[\bf Proof of Proposition~\ref{thm.d4}]	 
	
	We have 
	\begin{align*}
		\E{\textrm{cap}(\mathcal R_n+A)} 
		= \sum_{x\in A} \sum_{j=0}^{n} \pr{x\notin (\cR[1,n-j]+A), (x+\til{\cR}_\infty) \cap (\cR[-j,n-j]+A)=\emptyset}.
 	\end{align*}
	We then get the following bounds for $m=n/(\log n)^2$
	\begin{align*}
		&\E{\textrm{cap}(\mathcal R_n+A)} \geq n\cdot \sum_{x\in A} \pr{x\notin (\cR[1,n]+A),  (x+\til{\cR}_\infty) \cap (\cR[-n,n]+A)=\emptyset} \text{ and } \\
		&\E{\textrm{cap}(\mathcal R_n+A)} \leq m\cdot |A| +(n-m) \cdot \sum_{x\in A}\pr{x\notin (\cR[1,m]+A),  (x+\til{\cR}_\infty) \cap (\cR[-m,m]+A)=\emptyset}.
			\end{align*}
	It is then easy to conclude using Lemma~\ref{lem:sumoverA}.
	\end{proof}

 \section{Open problems}\label{sec-open}
We discuss some open problems related to our present analysis.

\paragraph{Hitting Times.} In Theorem~\ref{thm.KSd} we established that the probability that a sum of $N$ simple random walks started from $z$ hits a finite set $A$ is of order $G_N(z)\cdot \text{Cap}_{d-2N}(A)$, when $d>2N$ and $\|z\|\to \infty$. A natural question is whether the quantity 
\[
\frac{1}{G_N(z)} \bP\big( z+\cR^1_\infty+
\dots+\cR^N_\infty \cap A\not= \emptyset\big)
\]
has a limit as $\|z\|\to\infty$. 

Another natural question is whether the analogue of Theorem~\ref{thm.KSd} holds for the sum of invariant trees. More precisely, when $d>4N$ and $A$ is a finite subset of $\Z^d$, is the quantity
\[
\frac{1}{G_{2N}(z)} \bP\big( z+\cT^1_\infty+
\dots+\cT^N_\infty \cap A\not= \emptyset\big)
\]
of order $\cp{d-4N}{A}$ as $\|z\|\to \infty$?
One difficulty here will be to be able to define hitting times of the set $A$ for which we can decouple future and past for each invariant tree. 

\paragraph{Tails of local times.} In Theorem~\ref{theo.IE} we stated an intersection equivalence between the sum of two simple random walks and an infinite invariant tree. However, the equivalence between these two processes is not expected to hold beyond hitting probabilities. Obtaining tails for the local times of additive walks is an open problem. We expect the local times of the sum of two walks to decay as a stretched exponential when $d>4$ as opposed to an exponential decay in the case of the invariant tree (see~\cite[Theorem~1.6]{ASS23}).

\paragraph{Critical models.} There is a range of critical models for which several questions arise: the Minkowski sum of two simple random walks in $d=4$, the Minkowski sum of $3$ walks in $d=6$ and so on. Interesting questions include: 
\begin{itemize}
	\item [(i)] The fluctuations of the capacity of a sausage
obtained as we roll a
finite set over the trajectory of the process.
\item [(ii)] The tail of the local times, where we expect a stretched exponential
tail. It would be interesting to have a representation of
the rate function.
\item [(iii)] The {\it folding phenomenon} for additive
walks or trees, and the first estimates
we need is an upper bound on the probability to cover a region
up to a certain density (measured in a certain space-scale).
A typical example of such a folding phenomenon is the event of having a large
intersection between two invariant trees in dimension $d>8$, and
the approach should follow the analogous problem of intersection
of two random walks in $d>4$ studied recently in \cite{AS21}.
\end{itemize}

\end{document}